\documentclass{elsarticle}
\usepackage{amsmath,amssymb}
\newcommand{\PGL}{\mathop{\mathrm{PGL}}}

\newcommand{\PSL}{\mathop{\mathrm{PSL}}}

\newcommand{\Sym}{\mathop{\mathrm{Sym}}}
\newcommand{\Alt}{\mathop{\mathrm{Alt}}}
\newcommand{\crr}{\mathop{\mathrm{crr}}}
\newcommand{\fix}{\mathop{\mathrm{fix}}}
\newcommand{\Cay}{\mathop{\mathrm{Cay}}}
\newcommand{\Irr}{\mathop{\mathrm{Irr}}}
\newcommand{\Aut}{\mathop{\mathrm{Aut}}}
\newcommand{\Imm}{\mathop{\mathrm{Im}}}
\newcommand{\Ker}{\mathop{\mathrm{Ker}}}

\newtheorem{theorem}{Theorem}
\newtheorem{proposition}[theorem]{Proposition}
\newtheorem{lemma}[theorem]{Lemma}
\newtheorem{conjecture}{Conjecture}
\newproof{proof}{Proof}

\begin{document}

\title{An Erd\H{o}s-Ko-Rado theorem for the derangement graph of
  $\PGL(2,q)$ acting on the projective line} 
\author[karen]{Karen Meagher\corref{cor1}\fnref{fn1}}
\ead{karen.meagher@uregina.ca}

\address[karen]{Karen Meagher, Department of Mathematics and Statistics,\\
University of Regina, 3737 Wascana Parkway, S4S 0A4 Regina SK, Canada}

\author[pablo]{Pablo Spiga}
\ead{spiga@math.unipd.it}

\address[pablo]{Pablo Spiga, Dipartimento di Matematica Pura ed Applicata,\newline
Universit\`a degli Studi di Padova, via Trieste 63,
35121 Padova, Italy.}

\cortext[cor1]{Corresponding author}
\fntext[fn1]{Research supported by NSERC.}

\begin{abstract}
Let $G=\PGL(2,q)$ be the projective general linear group acting on the
projective line $\mathbb{P}_q$. A subset $S$ of $G$ is
intersecting if for any pair of permutations $\pi,\sigma$ in $S$,
there is a projective point $p\in \mathbb{P}_q$ such that
$p^\pi=p^\sigma$. We prove that if $S$ is intersecting, then  $|S|\leq
q(q-1)$. Also, we prove that the only sets $S$  
that meet this bound are the cosets of the stabilizer of a point of
$\mathbb{P}_q$. 
\end{abstract}

\begin{keyword}
derangement graph, independent sets, Erd\H{o}s-Ko-Rado theorem
\end{keyword}

\maketitle

\section{Introduction}
The Erd\H{o}s-Ko-Rado theorem~\cite{ErKoRa} is a very important result
in extremal combinatorics. There are many different proofs and
extensions of this theorem and we refer the reader to~\cite{DeFr} for
a full account. There are also many applications of the
Erd\H{o}s-Ko-Rado theorem, for example to qualitatively independent
sets~\cite{katona:73, kleitman:73}, problems in
geometry~\cite{MR934346} and in statistics~\cite{Li}.
 
In this paper, we are concerned with an extension of the
Erd\H{o}s-Ko-Rado theorem to permutation groups. Let $G$ be a
permutation group on $\Omega$. We let $\fix(g)$ denote the number of
fixed points of the permutation $g$ of $G$. A subset $S$ of $G$ is
said to be \emph{intersecting} if $\fix(g^{-1}h)\neq 0$, for every
$g,h\in S$. As with the Erd\H{o}s-Ko-Rado theorem, we are interested
in finding the size of the largest intersecting set of $G$ and
classifying the sets that attain this bound.  This problem can be
formulated in a graph-theoretic terminology.  We denote by $\Gamma_G$
the \emph{derangement graph} of $G$, the vertices of this graph are
the elements of $G$ and the edges are the pairs $\{g,h\}$ such that
$\fix(g^{-1}h)= 0$. An intersecting set of $G$ is simply an
\emph{independent set} of $\Gamma_G$.  In~\cite{CaKu} and~\cite{LaMa}
the natural  
extension of the Erd\H{o}s-Ko-Rado theorem for the symmetric group
$\Sym(n)$ was independently proven.  Indeed, it was shown that every
independent set of permutations in $\Gamma_{\Sym(n)}$ has size at most
$(n-1)!$. Also, the only sets that meet this bound are the cosets of
the stabilizer of a point. The same result was also proved
in~\cite{GoMe} using the character theory of $\Sym(n)$.

In this paper, inspired by the approach used in~\cite{GoMe}, we prove
a similar result for the permutation group $G=\PGL(2,q)$ acting on the
projective line $\mathbb{P}_q$. We show that an independent set $S$ in
$\Gamma_{G}$ has size at most $q(q-1)$. Also, we prove that the sets
$S$ that meet this bound are the cosets of the stabilizer of a point
of $\mathbb{P}_q$. In particular, these results are a natural $q$-analogue 
of the result for the symmetric group
$\Sym(n)$. Here we report the main theorem.

\begin{theorem}\label{main}Every independent set $S$ of the derangement graph of
  $\PGL(2,q)$ acting on the projective line $\mathbb{P}_q$ has size at
  most $q(q-1)$. Equality is met if and only if $S$ is the coset
  of the stabilizer of a point.
\end{theorem}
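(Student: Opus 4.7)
The plan is to combine a character-theoretic diagonalization of the derangement graph $\Gamma_G$, where $G=\PGL(2,q)$, with the ratio (Hoffman) bound for independent sets in a regular graph. Since the set $D$ of derangements of $G$ on $\mathbb{P}_q$ is a union of conjugacy classes, $\Gamma_G=\Cay(G,D)$ is a normal Cayley graph and its spectrum is given by
\[
\lambda_\chi \;=\; \frac{1}{\chi(1)}\sum_{g\in D}\chi(g),\qquad \chi\in\Irr(G).
\]
The lower bound $\alpha(\Gamma_G)\ge q(q-1)$ is immediate: for any $p\in\mathbb{P}_q$ the stabilizer $G_p$ has order $q(q-1)$, and each of its $q+1$ right cosets is an intersecting set.

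First I would identify $D$ explicitly: an element of $\PGL(2,q)$ acts without fixed points on $\mathbb{P}_q$ precisely when its matrix representatives have characteristic polynomial irreducible over $\mathbb{F}_q$, i.e.\ when it is a non-trivial element of a non-split maximal torus. Using $|T|=q+1$, $|N_G(T)|=2(q+1)$, and the fact that distinct non-split tori intersect trivially, one obtains $|D|=q^2(q-1)/2$. Next, the permutation character of $G$ on $\mathbb{P}_q$ decomposes as $1+\psi$, where $\psi\in\Irr(G)$ has degree $q$ (the Steinberg-type character), and $\psi(g)=\fix(g)-1=-1$ for every $g\in D$. Hence
\[
\lambda_\psi \;=\; -\frac{|D|}{q} \;=\; -\frac{q(q-1)}{2},
\]
and substituting $\tau=\lambda_\psi$ and $d=|D|$ into Hoffman's bound $\alpha(\Gamma_G)\le |G|(-\tau)/(d-\tau)$ yields exactly $q(q-1)$, matching the lower bound provided $\lambda_\psi$ is indeed the minimum eigenvalue.

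The heart of the argument, and the expected main obstacle, is verifying that $\lambda_\chi\ge -q(q-1)/2$ for every $\chi\in\Irr(G)\setminus\{\psi\}$. I would proceed family by family through the character table of $\PGL(2,q)$ (the trivial character; the Steinberg $\psi$; the principal series of degree $q+1$; the discrete/cuspidal series of degree $q-1$) and estimate $\sum_{g\in D}\chi(g)$ in each case using the explicit character values on split and non-split semisimple classes. The discrete series is the delicate family, since its small degree $q-1$ could a priori amplify the sum; here the values of $\chi$ on non-split classes are essentially traces of characters of the cyclic group $\mathbb{F}_{q^2}^{\times}/\mathbb{F}_q^{\times}$, so orthogonality on this cyclic group must be invoked carefully to rule out a strictly smaller eigenvalue.

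Finally, for the characterisation of equality, the standard Hoffman-equality argument applies: if $S$ is an independent set of size $q(q-1)$, then $v_S-\tfrac{1}{q+1}\mathbf{1}$ belongs to the $\lambda_\psi$-eigenspace, which coincides with the $\psi$-isotypic component of the left regular representation of $G$. The characteristic vectors of the cosets of the point stabilizers $G_p$, as $p$ ranges over $\mathbb{P}_q$, already span the direct sum of this component with the trivial one, so $v_S$ is a real linear combination of such coset indicators and $\mathbf{1}$. Imposing the constraints $v_S\in\{0,1\}^{|G|}$ and $\sum_{g\in G}v_S(g)=q(q-1)$ then forces $v_S$ to be the indicator of a single coset of some $G_p$, completing the proof.
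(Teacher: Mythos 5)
The bound half of your proposal is fine and is exactly the paper's route (normal Cayley graph, eigenvalues from the character table, Hoffman's bound), but the equality characterisation, which is where all the work lies, has two genuine gaps. First, a factual error: for $q$ odd the minimum eigenvalue $-q(q-1)/2$ is attained not only by the degree-$q$ constituent $\psi$ of the permutation character but also by the non-principal linear character $\lambda_{-1}$ (one computes $\lambda_{-1}(D)/\lambda_{-1}(1)=q(q-1)^2/4-q(q^2-1)/4=-q(q-1)/2$, since exactly $q(q-1)^2/4$ derangements lie in $\PSL(2,q)$). So the minimum eigenspace is \emph{not} the $\psi$-isotypic component, and your claim that $v_S-\frac{1}{q+1}\mathbf{1}$ lies in that component does not follow from Hoffman equality alone. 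The paper closes this hole by first proving the bound $q(q-1)/2$ for $\Gamma_{\PSL(2,q)}$ and deducing that any maximum independent set $S$ of $\Gamma_{\PGL(2,q)}$ splits evenly between $\PSL(2,q)$ and its complement, i.e.\ $\lambda_{-1}(S)=0$, so that $v_S$ is orthogonal to the $\lambda_{-1}$-eigenspace; you need some such argument.

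Second, and more seriously, your last sentence hides the actual heart of the theorem. Even granting the spanning claim (that the $(q+1)^2$ coset indicators span the trivial plus $\psi$-isotypic component --- itself not free: the paper proves it by computing $A^{T}A$ for the group-element versus ordered-pair incidence matrix $A$ and showing $\mathrm{rank}(A)=q^2+1$), the step ``$v_S$ Boolean with the right sum, hence $v_S$ is a single coset indicator'' is not a routine imposition of constraints. The paper needs: the cross-ratio/quadratic-residue computation of the Gram matrix $N=M^{T}M$ of the derangement-row block $M$ of $A$; character-sum estimates over the principal and cuspidal series to show $M$ has full rank $q(q-1)$; the passage to a column-submatrix $\overline{A}$ of rank $q^2+1$ whose derangement block $\overline{M}$ has full column rank; and then, normalising $1\in S$ so that $v_S$ vanishes on derangements, the conclusion $w=0$ plus a final argument with unipotent elements fixing exactly one point to force the remaining coefficient vector to be a standard basis vector. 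None of this is implied by Booleanness, and your proposal offers no substitute for it. Relatedly, you misplace the difficulty: checking that no character beats $-q(q-1)/2$ is a few lines from the known character table, whereas the uniqueness of the extremal sets is what occupies Sections 4 and 5 of the paper.
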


Theorem~\ref{main} indicates that the argument described
in~\cite{GoMe} will likely be 
particularly fruitful for studying the derangement graph of a
permutation group $G$ whose character theory is well-understood. 

In Section~\ref{generalresults} we recall some general results on the
eigenvalues and the independent sets of the graph $\Gamma_G$ that will
be used throughout the paper. In Section~\ref{pgl2q} the character
table of $\PGL(2,q)$ is described. Section~\ref{aux} includes
technical lemmas that are used in Section~\ref{mainSec} to prove
Theorem~\ref{main}. Finally, in Section~\ref{comments}, we conclude
with some general remarks on the derangement graphs of $2$-transitive
and $3$-transitive groups.

\section{General results}\label{generalresults}

Let $G$ be a permutation group on $\Omega$ and $\Gamma_G$ its
derangement graph. Since the right regular representation of $G$ is a
subgroup of $\Aut(\Gamma_G)$, we see that $\Gamma_G$ is a Cayley
graph. Namely, if $D$ is the set of \emph{derangements} of $G$ (i.e. the
fixed-point-free permutations of $G$), then
$\Gamma_G$ is the Cayley graph on $G$ with connection set $D$,
i.e. $\Gamma_G=\Cay(G,D)$.  Clearly, $D$ is a union of $G$-conjugacy
classes, so $\Gamma_G$ is a normal Cayley graph. 

As usual, we simply say that the complex number $\xi$ is an
\emph{eigenvalue} of the graph $\Gamma$ if $\xi$ is an
eigenvalue of the adjacency matrix of $\Gamma$. 

In this paper, we use $\Irr(G)$ to denote the \emph{irreducible complex
characters} of a group $G$ and given $\chi\in\Irr(G)$ and  a subset $S$ of $G$
we write $\chi(S)$ for $\sum_{s\in S}\chi(s)$. In the following lemma we
recall that the eigenvalues of $\Gamma_G$ 
are determined by the irreducible complex characters of the group
$G$.

\begin{lemma}\label{eigenvalues}Let $G$ be a permutation group on
  $\Omega$ and $D$ the set of derangements of $G$. The spectrum of the
  graph $\Gamma_G$ 
  is $\{\chi(D)/\chi(1)\mid \chi\in \Irr(G)\}$. Also, if
  $\tau$ is an eigenvalue of $\Gamma_G$ and $\chi_1,\ldots,\chi_s$
  are the irreducible characters of $G$ such that
  $\tau=\chi_i(D)/\chi_i(1)$, then the dimension of the $\tau$ 
  eigenspace of $\Gamma_G$ is $\sum_{i=1}^s\chi_i(1)^2.$
\end{lemma}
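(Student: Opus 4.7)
The plan is to realize the adjacency matrix of $\Gamma_G=\Cay(G,D)$ as the right-multiplication operator by the central element $\widehat{D}=\sum_{d\in D}d$ in the group algebra $\mathbb{C}[G]$, and then to read off its spectrum from the Wedderburn decomposition. First I would observe that, indexing rows and columns of the adjacency matrix $A$ by elements of $G$, the entry $A_{g,h}$ equals $1$ precisely when $g^{-1}h\in D$, which is the same as saying $h=gd$ for some $d\in D$. Thus $A$ coincides with the matrix of the linear map on $\mathbb{C}[G]$ sending $g$ to $g\widehat{D}$; equivalently, $A$ is the image of $\widehat{D}$ under the right regular representation of $G$.

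The key structural fact is that since $D$ is closed under conjugation (the set of derangements is a union of $G$-conjugacy classes), the element $\widehat{D}$ lies in the centre $Z(\mathbb{C}[G])$. I would then invoke the standard Wedderburn decomposition, under which the regular representation splits as $\mathbb{C}[G]\cong\bigoplus_{\chi\in\Irr(G)}V_\chi^{\chi(1)}$, where $V_\chi$ is the irreducible module affording $\chi$. By Schur's lemma, a central element acts on each irreducible $V_\chi$ as a scalar $\lambda_\chi$. Taking the trace of this action on $V_\chi$ gives $\chi(1)\lambda_\chi=\chi(\widehat{D})=\sum_{d\in D}\chi(d)=\chi(D)$, hence $\lambda_\chi=\chi(D)/\chi(1)$. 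This identifies the set of eigenvalues of $A$ (and hence of $\Gamma_G$) as $\{\chi(D)/\chi(1):\chi\in\Irr(G)\}$, establishing the first assertion.

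For the multiplicity statement, I would note that the isotypic component of $V_\chi$ inside $\mathbb{C}[G]$ has dimension $\chi(1)\cdot\chi(1)=\chi(1)^2$, and on this whole component $\widehat{D}$ acts as the scalar $\chi(D)/\chi(1)$. Hence, if $\tau$ is a fixed eigenvalue and $\chi_1,\ldots,\chi_s$ are exactly the irreducible characters of $G$ for which $\chi_i(D)/\chi_i(1)=\tau$, then the $\tau$-eigenspace of $A$ is the direct sum of the corresponding isotypic components, of total dimension $\sum_{i=1}^{s}\chi_i(1)^2$. I do not anticipate a genuine obstacle here; the only care needed is the bookkeeping that different irreducible characters can produce the same eigenvalue, which is precisely why the multiplicity is written as a sum over the matching $\chi_i$ rather than as a single $\chi(1)^2$.
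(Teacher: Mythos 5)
Your proof is correct. Note, though, that the paper does not argue this at all: its entire proof is the observation that $\Gamma_G$ is a normal Cayley graph (since $D$ is a union of conjugacy classes) together with a citation of Babai's theorem on spectra of Cayley graphs. What you have written is essentially the standard argument that lies behind that citation in the normal case: realizing the adjacency matrix as right multiplication by $\widehat{D}=\sum_{d\in D}d$, noting that $\widehat{D}$ is central, and applying Schur's lemma on each irreducible constituent of the regular module to get the scalar $\chi(D)/\chi(1)$, with the isotypic component of dimension $\chi(1)^2$ supplying the multiplicity count (and with eigenspaces for coinciding values $\chi_i(D)/\chi_i(1)=\tau$ summing to $\sum_i\chi_i(1)^2$). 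Your route buys a self-contained proof that needs nothing beyond basic character theory, and it makes transparent why normality of the connection set is the crucial hypothesis; the paper's route buys brevity and defers to a more general result (Babai's formula applies to arbitrary Cayley graphs, not just normal ones). Two small bookkeeping points you may wish to make explicit: the matrix of right multiplication by $\widehat{D}$ in the group-element basis is the transpose of $A$, which is harmless here since $D=D^{-1}$ (a permutation is fixed-point-free if and only if its inverse is), so $A$ is symmetric; and the eigenspace identification uses that the whole of $\mathbb{C}[G]$ is the direct sum of the isotypic components, so nothing outside the components with scalar $\tau$ can contribute to the $\tau$-eigenspace --- you state this, and it is exactly the right justification.
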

\begin{proof}
Since $\Gamma_G$ is a normal Cayley graph, the result follows
from~\cite{Ba}. 
\end{proof}

Since $\Gamma_G$ is a Cayley graph, the $\mathbb{C}$-vector space with
basis elements labelled by the vertices of $\Gamma_G$ is the
$\mathbb{C}$-vector space underlying the group algebra
$\mathbb{C}G$. If $S$ is a subset of $G$, we simply write $v_S$ for
the vector $\sum_{g\in S}g$ of $\mathbb{C}G$ (so $v_S$ is the
\emph{characteristic vector} for the set $S$). In particular, $v_G$ is
the all-1 vector of $\mathbb{C}G$. Finally, we recall Hoffman's
bound for the size of an independent set of $\Gamma_G$ and a
consequence for when equality holds in this bound.

\begin{lemma}\label{bound}
Let $G$ be a permutation group on $\Omega$. Let $S$ be an
independent set of $\Gamma_G$ and $\tau$ be the minimum eigenvalue of
$\Gamma_G$. Assume that the valency of $\Gamma_G$ is
$d$. Then $|S|\leq |G|/(1-\frac{d}{\tau})$. 
 If the equality is met, then $v_S-\frac{|S|}{|G|}v_G$ is an eigenvector
 of $\Gamma_G$ with eigenvalue $\tau$.
\end{lemma}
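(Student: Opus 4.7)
The plan is to invoke the classical Hoffman (ratio) bound argument, working inside the group algebra $\mathbb{C}G$ equipped with the standard inner product $\langle \cdot,\cdot\rangle$ for which the elements of $G$ form an orthonormal basis. In this inner product $\|v_S\|^2 = |S|$ and $\langle v_S, v_G\rangle = |S|$ for every $S \subseteq G$. Let $A$ denote the adjacency matrix of $\Gamma_G$, acting as a self-adjoint operator on $\mathbb{C}G$.

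The first step is to decompose $v_S$ against the eigenspaces of $A$. Since $\Gamma_G$ is $d$-regular, $v_G$ is an eigenvector of $A$ with eigenvalue $d$, so we may write $v_S = \frac{|S|}{|G|}v_G + w$ with $w\perp v_G$, and a direct computation gives $\|w\|^2 = |S| - |S|^2/|G|$. Expanding $w = \sum_i c_i u_i$ in an orthonormal eigenbasis of $A$ restricted to $v_G^{\perp}$, with corresponding eigenvalues $\lambda_i \geq \tau$, I then compute $\langle A v_S, v_S\rangle$ in two ways. On the one hand, since $S$ is independent no two elements of $S$ are adjacent in $\Gamma_G$, so the pairing equals $0$. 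On the other hand, spectrally $\langle Av_S,v_S\rangle = \frac{d|S|^2}{|G|} + \sum_i \lambda_i c_i^2$. Replacing each $\lambda_i$ by its lower bound $\tau$ and invoking $\sum_i c_i^2 = \|w\|^2$ yields, after rearrangement, the bound $|S|(1-d/\tau) \leq |G|$.

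For the equality clause I would trace back through the estimate: the step $\sum_i \lambda_i c_i^2 \geq \tau \sum_i c_i^2$ is tight precisely when $c_i = 0$ for every $i$ with $\lambda_i \neq \tau$. Hence when equality is attained, the component $w = v_S - \frac{|S|}{|G|} v_G$ of $v_S$ orthogonal to $v_G$ lies entirely in the $\tau$-eigenspace of $A$, which is the desired conclusion. The only non-routine point is the identification of $\langle Av_S, v_S \rangle$ with twice the number of edges inside $S$ (and hence with $0$ by independence); beyond this bookkeeping, the argument is purely linear algebra, so I do not anticipate a genuine obstacle.
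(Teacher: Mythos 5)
Your argument is correct and is essentially the paper's own proof of the ratio (Hoffman) bound: the paper packages the estimate as positive semidefiniteness of the auxiliary matrix $A-\tau I-\frac{d-\tau}{|G|}J$ and reads the equality case off $Mv_S=0$, while you expand $v_S=\frac{|S|}{|G|}v_G+w$ in an eigenbasis and track tightness of $\sum_i\lambda_i c_i^2\geq\tau\sum_i c_i^2$; these are the same computation in two standard dressings. The only bookkeeping worth making explicit is that the final rearrangement uses $\tau<0$ (which holds since $\Gamma_G$ has edges), exactly as the paper's version also implicitly does.
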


\begin{proof}
Set $v=|G|$, and $M=A-\tau I-(d-\tau)/vJ$, where $A$ is the
adjacency matrix of $\Gamma_G$ and $J$ is the all-1 matrix. By
definition of $\tau$ and by
construction, $M$ is a positive semidefinite symmetric matrix. Hence,
given a set $S \subseteq G$ of size $s$, it is true that 
$$0\leq v_S^T  Mv_S=v_S^T Av_S-\tau v_S^T
v_S-\frac{d-\tau}{v} v_S^T Jv_S 
=v_S^T Av_S-\tau s-\frac{d-\tau}{v} s^2.$$ 
If $S$ is an independent set of $\Gamma_G$, then
$v_S^T Av_S=0$ and the previous inequality yields the first part of
the lemma.

Now, suppose that equality holds. Then $v_S^T Mv_S=0$. Since $M$ is
positive semidefinite, we obtain $Mv_S=0$. Therefore 

$$A\left(v_S-\frac{s}{v}v_G\right)=\tau\left(v_S-\frac{s}{v}v_G\right),$$
and the proof is completed.
\end{proof}

\section{$\PGL(2,q)$} \label{pgl2q}

We recall that the character table of $\PGL(2,q)$ was computed by
Jordan and Schur in $1907$
and can be found in many textbooks, see~\cite{JamesLiebeck}. Also,
the character tables of $\PGL(3,q)$ and $\PGL(4,q)$ were found by R. 
Steinberg and the character table of $\PGL(n,q)$ was finally determined
by J. Green in $1955$ in the celebrated paper~\cite{Gr}.

We give the character table of $\PGL(2,q)$, first for $q$ even and
second for $q$ odd. We note that by abuse of terminology we often
refer to the elements of $\PGL(2,q)$ as matrices.

\begin{table}[!h]
\begin{tabular}{|c|c|cccc|}\hline
            &Name        &$1$  &$u$    &$d_x$    &$v_r$\\\hline 
            &Nr.         &$1$  &$1$    &$\frac{q}{2}-1$&$\frac{q}{2}$\\\hline
            &Size        &$1$  &$q^2-1$&$q(q+1)$ &$q(q-1)$\\\hline
Name        &Nr.         &     &       &         &          \\\hline
$\lambda_1$ &$1$         &$1$  &   $1$ &     $1$ &    $1$   \\\hline
$\psi_1$    &$1$         &$q$  & $0$   &     $1$ &   $-1$   \\\hline
$\eta_\beta$&$\frac{q}{2}$  &$q-1$&$-1$&$0$&$-\beta(r)-\beta(r^{-1})$\\\hline
$\nu_\gamma$&$\frac{q}{2}-1$&$q+1$&$1$ &$\gamma(x)+\gamma(x^{-1})$&$0$\\\hline 
\end{tabular}
\caption{Character table of $\PGL(2,q)$, for $q$ even.}\label{Tb2}
\end{table}

\begin{table}[!h]
\begin{tabular}{|c|c|cccccc|}\hline
             & Name&$1$  &$u$    &$d_x$     &$d_{-1}$   &$v_r$&$v_{i}$\\\hline 
             & Nr.&$1$&$1$&$\frac{q-3}{2}$ &$1$&$\frac{q-1}{2}$&$1$\\\hline
&Size&$1$&$q^2\!-\!1$&$q(q+1)$&$\frac{q(q+1)}{2}$&$q(q-1)$&$\frac{q(q-1)}{2}$\\\hline
Name         & Nr.  &     &       &          &          &&\\\hline
$\lambda_1$  &  $1$    &$1$  &   $1$
&$1$&$1$&$1$&$1$\\\hline
$\lambda_{-1}$  &  $1$    &$1$  &   $1$
&$\varepsilon(x)$&$\varepsilon(-1)$&$\varepsilon(r)$&$\varepsilon(i)$\\\hline
$\psi_1$&$1$&$q$&$0$&$1$&$1$&$-1$&$-1$\\\hline
$\psi_{-1}$&$1$&$q$&$0$&$\varepsilon(x)$&$\varepsilon(-1)$&$-\varepsilon(r)$&$-\varepsilon(i)$\\\hline
$\eta_\beta$&$\frac{q-1}{2}$&$q-1$&$-1$&$0$&$0$&$-\beta(r)\!\!-\!\beta(r^{-1})$&$-2\beta(i)$\\\hline
$\nu_\gamma$&$\frac{q-3}{2}$&$q+1$&$1$&$\gamma(x)\!+\!\gamma(x^{-1})$&$2\gamma(-1)$&$0$&$0$\\\hline  
\end{tabular}
\caption{Character table of $\PGL(2,q)$, for $q$ odd.}\label{Tb1}
\end{table}

We briefly explain the notation used, but refer the reader
to~\cite{JamesLiebeck} for full details. We start by describing the 
conjugacy classes. The elements of $\PGL(2,q)$
can be collected in four sets: the set consisting only of the
identity element, the set consisting of the non-scalar
matrices with only one eigenvalue,  the set
consisting of the matrices with two distinct 
eigenvalues in $\mathbb{F}_q$ and the set of matrices with no
eigenvalues in $\mathbb{F}_q$. The non-scalar matrices with only one
eigenvalue form a conjugacy class of size $q^2-1$ and are represented by
a unipotent matrix $u$. Every matrix with two distinct eigenvalues in
$\mathbb{F}_q$ is conjugate to a diagonal matrix $d_x$, with $x$ and $1$
along the main diagonal. Now, $d_x$ and $d_y$ are conjugate if and
only if $y=x$ or $y=x^{-1}$.  So, the label $x$ in the character table
of $\PGL(2,q)$ for $d_x$ represents
an element  of $\mathbb{F}_q\setminus\{0,1\}$ up to
inversion.  Now, for $q$ odd, let 
$i\in\mathbb{F}_{q^2}$ be an element of order $2$ in
$\mathbb{F}_{q^2}^*/\mathbb{F}_q^*$. The matrices with no eigenvalues
in  $\mathbb{F}_q$ are conjugate in $\PGL(2,q^2)$ to a diagonal
matrix, with $r$ and 
$r^q$ along the main diagonal (for $r\in
\mathbb{F}_{q^2}\setminus\mathbb{F}_q$). Each of these matrices is
conjugate in $\PGL(2,q)$ to the matrix $v_r$ depicted below.  Also,
$v_x$ and $v_y$ are 
conjugate if and only if 
$y\mathbb{F}_q^*=x\mathbb{F}_q^*$ or
$y\mathbb{F}_q^*=x^{-1}\mathbb{F}_q^*$. So, the label $r$ in the
character table of 
$\PGL(2,q)$ for $v_r$ represents an element of
$\mathbb{F}_{q^2}^*/\mathbb{F}_q^*$ up to inversion.  So, \[
u=\left[
\begin{array}{cc}
1&1\\
0&1
\end{array}
\right],
d_x=\left[
\begin{array}{cc}
x&0\\
0&1
\end{array}
\right],
v_r=\left[
\begin{array}{cc}
0&1\\
-r^{1+q}&r+r^q
\end{array}
\right].
\]


We note here that in the permutation group $\PGL(2,q)$ acting on the
projective line $\mathbb{P}_q$, the permutations in the conjugacy
class represented by $u$ fix exactly one point, the permutations in
the conjugacy classes represented by $d_x$ fix exactly $2$ points
while the permutations in the conjugacy classes represented by
$v_r$ fix no point. 

Next we describe the characters of $\PGL(2,q)$ by describing the maps
$\varepsilon$, $\gamma$ and $\eta$. The map $\varepsilon$ is defined by
$\varepsilon(x)=1$ if $d_x\in \PSL(2,q)$ and $\varepsilon(x)=-1$
otherwise. Similarly, $\varepsilon(r)=1$ if $v_r\in \PSL(2,q)$ and
$\varepsilon(r)=-1$ otherwise.  So, $\lambda_{-1}:\PGL(2,q) \to\{\pm 1\}$ is
the non-principal linear character.  The letter $\gamma$ represents a
group homomorphism $\gamma:\mathbb{F}_q^*\to \mathbb{C}$ of order
greater than $2$. Also, $\nu_{\gamma_1}=\nu_{\gamma_2}$ if and only if
$\gamma_2=\gamma_1$ or $\gamma_2=\gamma_1^{-1}$. So, the label
$\gamma$ runs through the homomorphisms $\gamma:\mathbb{F}_q^{*}\to
\mathbb{C}$ of order greater than $2$ up to inversion. The letter
$\beta$ stands for a group homomorphism
$\beta:\mathbb{F}_{q^2}^{*}/\mathbb{F}_q^*\to \mathbb{C}$ of order
greater than $2$. Also, $\eta_{\beta_1}=\eta_{\beta_2}$ if and only if
$\beta_2=\beta_1$ or $\beta_2=\beta_1^{-1}$. So, the label $\beta$
runs through the homomorphisms
$\beta:\mathbb{F}_{q^2}^{*}/\mathbb{F}_q^*\to \mathbb{C}$ of order
greater than $2$ up to inversion. Note that given a non-principal
linear character $\psi:C\to\mathbb{C}$ of a cyclic group $C$, we get
$\sum_{c\in C}\psi(c)=0$. In particular, given $\gamma$ and $\beta$ as
in Table~\ref{Tb2} and Table~\ref{Tb1}, we obtain
$\sum_{x\in\mathbb{F}_q^*}\gamma(x)=0$ and
$\sum_{r\mathbb{F}_q^*\in\mathbb{F}_{q^2}^*/\mathbb{F}_q^*}\beta(r)=0$.

In the following we simply denote by $G_{q}$ the permutation group
$\PGL(2,q)$ acting on the projective line $\mathbb{P}_q$, and $D$ the
set of derangements of $G_q$.

Using Table~\ref{Tb2} (for $q$ even), Table~\ref{Tb1}
(for $q$ odd) and the fact that the elements in the conjugacy classes
labelled by $v_r$ are the derangements
of $G_q$, it is possible to compute $\chi(D)/\chi(1)$, for all $\chi\in
\Irr(G_q)$. For example, if $\chi = \lambda_1$ then we easily see that 
\[
\frac{\lambda_1(D)}{\lambda_1(1)}=\frac{q(q-1)(q-1)}{2}+\frac{q(q-1)}{2}
=\frac{q^2(q-1)}{2},
\] 
which is equal to $|D|$, is an eigenvalue of $\Gamma_{G_q}$.
The only cases where this
is not trivial is when $\chi=\eta_\beta$ and $\chi=\lambda_{-1}$. If
$\chi=\eta_\beta$, then we have
\[
\frac{\chi(D)}{\chi(1)}=
\frac{q(q-1)}{q-1}\bigg(
\sum_{\stackrel{r\mathbb{F}_q^*\in\mathbb{F}_{q^2}^*/\mathbb{F}_q^*}{r\notin\mathbb{F}_q^*}}
-\beta(r)\bigg)=
 -q \bigg(
\sum_{\stackrel{r\mathbb{F}_q^*\in\mathbb{F}_{q^2}^*/\mathbb{F}_q^*}{r\notin\mathbb{F}_q^*}}
\beta(r)\bigg)=
q
\]
since $\mathbb{F}_{q^2}^*/\mathbb{F}_q^*$ is a cyclic group. By direct
calculation or using the description of the conjugacy classes of
$\PSL(2,q)$ given in~\cite{JamesLiebeck}, we see that
$\PSL(2,q)$ contains $q(q-1)^2/4$ 
derangements. As $|D|=q^2(q-1)/2$, we
get that $\PGL(2,q)\setminus \PSL(2,q)$ contains $q(q^2-1)/4$
derangements. So, if 
$\chi=\lambda_{-1}$, we have 
$$\frac{\lambda_{-1}(D)}{\lambda_{-1}(1)}=\frac{q(q-1)^2}{4}-
\frac{q(q^2-1)}{4}=-\frac{q(q-1)}{2}.$$  

In particular, we obtain the following table:
\begin{table}[!h]
\begin{tabular}{|l|cccccc|}\hline
Character
&$\lambda_1$&$\lambda_{-1}$&$\psi_1$&$\psi_{-1}$&$\eta_\beta$
&$\nu_\gamma$\\\hline 
Eigenvalue &$\frac{q^2(q-1)}{2}$&$\frac{-q(q-1)}{2}$&$\frac{-q(q-1)}{2}$& 
$\frac{q-1}{2}$&$q$&$0$ \\ \hline
Dimension & $1$ & $1$ & $q^2$ & $q^2$ & $\frac{(q-1)^3}{2}$ & $\frac{(q+1)^2(q-3)}{2}$ \\ \hline
\end{tabular}
\caption{Eigenvalues of $\Gamma_{G_q}$\label{evaluesPGL2q}}
\end{table}

In summary, the valency of the graph $\Gamma_{G_q}$ is $q^2(q-1)/2$ and the
minimum eigenvalue $\tau$ is $-q(q-1)/2$.
If $q$ is even, then $\psi_1$ is the only character $\chi$
such that $\tau=\chi(D)/\chi(1)$. If $q$ is odd, then $\lambda_{-1}$
and $\psi_1$ are the only characters $\chi$ such that
$\tau=\chi(D)/\chi(1)$.

\begin{lemma}\label{boundPGL2q}
An independent set of maximal size in $\Gamma_{G_q}$ has size $q(q-1)$. 
\end{lemma}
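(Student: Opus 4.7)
The plan is to apply Hoffman's bound (Lemma~\ref{bound}) using the spectral data already collected in Table~\ref{evaluesPGL2q}, and then match the bound with an explicit construction.

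For the upper bound, I would first record that $|G_q|=|\PGL(2,q)|=q(q-1)(q+1)$, that the valency of $\Gamma_{G_q}$ is $d=q^2(q-1)/2$ (this is $|D|$, the number of derangements), and that the minimum eigenvalue is $\tau=-q(q-1)/2$ (read off from Table~\ref{evaluesPGL2q} and confirmed by the observation that both $\psi_1$ (for $q$ even) and $\lambda_{-1},\psi_1$ (for $q$ odd) attain it). Then Lemma~\ref{bound} gives
\[
|S|\leq \frac{|G_q|}{1-\dfrac{d}{\tau}}
=\frac{q(q-1)(q+1)}{1-\dfrac{q^2(q-1)/2}{-q(q-1)/2}}
=\frac{q(q-1)(q+1)}{q+1}=q(q-1),
\]
which is the desired inequality.

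For the matching lower bound, I would exhibit an intersecting set of size exactly $q(q-1)$. Since $G_q=\PGL(2,q)$ acts transitively on the projective line $\mathbb{P}_q$ of size $q+1$, the stabilizer $H=(G_q)_p$ of any point $p\in\mathbb{P}_q$ has order $|G_q|/(q+1)=q(q-1)$. Every element of $H$ fixes $p$, so for any $g,h\in H$ we have $p^{g^{-1}h}=p$, in particular $\fix(g^{-1}h)\geq 1\neq 0$. Thus $H$ is an independent set of $\Gamma_{G_q}$ of size $q(q-1)$, meeting the Hoffman bound.

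Combining the two directions, the maximum size of an independent set of $\Gamma_{G_q}$ is exactly $q(q-1)$, as claimed. There is no real obstacle here: once Lemma~\ref{bound} and Table~\ref{evaluesPGL2q} are in hand, both the upper bound and a size-$q(q-1)$ example (a point-stabilizer) are immediate. The genuinely hard part of Theorem~\ref{main}—classifying the extremal sets as cosets of point-stabilizers—is deferred to later sections and is not needed for the present lemma.
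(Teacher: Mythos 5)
Your proposal is correct and follows exactly the paper's argument: apply Hoffman's bound (Lemma~\ref{bound}) with the valency $q^2(q-1)/2$ and minimum eigenvalue $-q(q-1)/2$ from Table~\ref{evaluesPGL2q} to get $|S|\leq q(q-1)$, and note that a point-stabilizer (or a coset thereof) is an intersecting set of that size. The only difference is that you spell out the arithmetic the paper leaves implicit.
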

\begin{proof}
The coset of the stabilizer of a point of $G_{q}$ is an independent
set in $\Gamma_{G_q}$ of size $|G_{q}|/(q+1)=q(q-1)$. From the
eigenvalues of $\Gamma_{G_q}$ and Lemma~\ref{bound} we see that
such an independent set is an independent set of maximal size. 
\end{proof}

Similar to the case of the symmetric group (and also the standard
Erd\H{o}s-Ko-Rado theorem for sets), finding the bound in Theorem~\ref{main} is
not difficult, but it is in the characterization of the sets that meet
this bound that the difficulty lies. Indeed, it is not difficult to
also establish a similar bound on the size of the independent sets in
the derangement graph for the group $\PSL(2,q)$.

\begin{lemma}\label{boundPSL2q}
An independent set of maximal size in $\Gamma_{\PSL(2,q)}$ has size $q(q-1)/2$. 
\end{lemma}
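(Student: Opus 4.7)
The plan is to mirror the proof of Lemma~\ref{boundPGL2q} using Hoffman's bound. For $q$ even one has $\PSL(2,q)=\PGL(2,q)$, so assume $q$ is odd throughout. The lower bound is immediate: $\PSL(2,q)$ is transitive on $\mathbb{P}_q$, so a point stabiliser has index $q+1$; its cosets have size $q(q-1)/2$ and are intersecting sets because their elements share the common fixed point.

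For the upper bound I would apply Lemma~\ref{bound} to $\Gamma_{\PSL(2,q)}$. By Lemma~\ref{eigenvalues} its eigenvalues are $\chi(D')/\chi(1)$ for $\chi\in\Irr(\PSL(2,q))$, where $D'=D\cap\PSL(2,q)$ and the valency is $d=|D'|=q(q-1)^2/4$, already computed in the preceding discussion. A short calculation shows that the claimed bound $q(q-1)/2$ is precisely the value of Hoffman's bound when the minimum eigenvalue equals $\tau=-(q-1)^2/4$. That $-(q-1)^2/4$ is an eigenvalue is essentially free: the Steinberg character of $\PSL(2,q)$ is the restriction of $\psi_1$ from $\PGL(2,q)$, takes the value $-1$ on every $v_r$-class, so evaluates to $-|D'|$ on $D'$; dividing by the degree $q$ gives $-(q-1)^2/4$.

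It remains to verify that no other character of $\PSL(2,q)$ yields a strictly smaller eigenvalue. Using Clifford theory and the fact that $[\PGL(2,q):\PSL(2,q)]=2$, the irreducible characters of $\PSL(2,q)$ fall into two families: those arising as the common restriction of a pair $\{\chi,\chi\otimes\lambda_{-1}\}$ of $\PGL(2,q)$-characters (covering the trivial character, the Steinberg and the generic $\eta_\beta$, $\nu_\gamma$), and the exceptional $(q\pm 1)/2$-dimensional characters that come from splitting an $\eta_\beta$ (when $q\equiv 3\pmod 4$) or a $\nu_\gamma$ (when $q\equiv 1\pmod 4$) fixed by tensoring with $\lambda_{-1}$. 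For the first family, $\chi(D')/\chi(1)$ can be computed almost verbatim as in the discussion preceding Table~\ref{evaluesPGL2q}, using the orthogonality relations $\sum_r\beta(r)=0$ and $\sum_x\gamma(x)=0$ and the observation that every $v_r$-class contained in $\PSL(2,q)$ keeps its $\PGL(2,q)$-size (its $\PSL(2,q)$-centraliser has exactly half the order of its $\PGL(2,q)$-centraliser).

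The main obstacle is the analysis of the exceptional $(q\pm 1)/2$-dimensional characters, whose values on the $\PSL(2,q)$-classes are not simply inherited from $\PGL(2,q)$ but involve Gauss-sum-like expressions. A case analysis according to $q\bmod 4$, using the character tables in~\cite{JamesLiebeck}, is required to verify that none of the eigenvalues coming from these exceptional characters is smaller than $-(q-1)^2/4$. Once this bookkeeping is complete, Lemma~\ref{bound} delivers
\[
|S|\leq \frac{|\PSL(2,q)|}{1-d/\tau} = \frac{q(q^2-1)/2}{q+1} = \frac{q(q-1)}{2},
\]
matching the construction and completing the proof.
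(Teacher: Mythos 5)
Your proposal is correct and is essentially the paper's own (very terse) proof: apply Hoffman's bound (Lemma~\ref{bound}) using the character table of $\PSL(2,q)$ from~\cite{JamesLiebeck}, and match the resulting bound with the cosets of a point stabilizer, which have size $q(q-1)/2$. The bookkeeping you defer does go through routinely: for $q\equiv 1\pmod 4$ the exceptional characters of degree $(q+1)/2$ vanish on all elliptic (derangement) classes and give eigenvalue $0$, while for $q\equiv 3\pmod 4$ those of degree $(q-1)/2$ take only the values $\pm 1$ there and give eigenvalue $q$, so the minimum eigenvalue is indeed $-(q-1)^2/4$, coming from the Steinberg character, exactly as you claim.
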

\begin{proof}
  This can be proved using Lemma~\ref{bound} with the information on
  the character table of $\PSL(2,q)$ in~\cite{JamesLiebeck} and the
  fact that a point-stabilizer in $\PSL(2,q)$ has size $q(q-1)/2$.
\end{proof}

The next lemma will be used in Lemma~\ref{rankA} to limit the search
of independent sets of maximal size in $\Gamma_{G_q}$.

\begin{lemma}\label{fix}Assume $q$ odd. If $S$ is an independent set of maximal
  size of $\Gamma_{G_q}$, then $\lambda_{-1}(S)=0$. 
\end{lemma}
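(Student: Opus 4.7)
The plan is to decompose $S$ according to whether its elements lie in $\PSL(2,q)$ or not, and bound each piece separately using the independence number of $\Gamma_{\PSL(2,q)}$ from Lemma~\ref{boundPSL2q}.

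Set $S_+=S\cap\PSL(2,q)$ and $S_-=S\setminus\PSL(2,q)$. Since $\lambda_{-1}$ is the non-principal linear character with $\lambda_{-1}(g)=1$ on $\PSL(2,q)$ and $-1$ elsewhere, we have
\[
\lambda_{-1}(S)=|S_+|-|S_-|,\qquad |S_+|+|S_-|=|S|=q(q-1).
\]
So the statement is equivalent to $|S_+|=|S_-|=q(q-1)/2$.

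The key observation is that both $S_+$ and a suitable translate of $S_-$ are independent sets of $\Gamma_{\PSL(2,q)}$. For $S_+$, if $s_1,s_2\in S_+$ then $s_1^{-1}s_2\in\PSL(2,q)$, and since $S$ is independent in $\Gamma_{G_q}$, the element $s_1^{-1}s_2$ has a fixed point on $\mathbb{P}_q$; hence $s_1^{-1}s_2$ is not a derangement of $\PSL(2,q)$. For $S_-$, fix any $g\in\PGL(2,q)\setminus\PSL(2,q)$; then $g^{-1}S_-\subseteq\PSL(2,q)$, and for $s_1,s_2\in S_-$ we have $(g^{-1}s_1)^{-1}(g^{-1}s_2)=s_1^{-1}s_2$, which again is not a derangement. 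Thus $g^{-1}S_-$ is an independent set of $\Gamma_{\PSL(2,q)}$ of the same size as $S_-$.

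Applying Lemma~\ref{boundPSL2q} to each piece yields $|S_+|\leq q(q-1)/2$ and $|S_-|\leq q(q-1)/2$. Since these two upper bounds sum to exactly $|S|=q(q-1)$, both inequalities must be tight, giving $|S_+|=|S_-|=q(q-1)/2$ and therefore $\lambda_{-1}(S)=0$. No real obstacle is expected; the only subtlety is checking that the intersecting property transfers to $g^{-1}S_-$, which is immediate from the identity $(g^{-1}s_1)^{-1}(g^{-1}s_2)=s_1^{-1}s_2$.
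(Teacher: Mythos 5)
Your proposal is correct and follows essentially the same argument as the paper: split $S$ into $S_+=S\cap\PSL(2,q)$ and $S_-$, translate $S_-$ into $\PSL(2,q)$ by an outer element, and apply the bound of Lemma~\ref{boundPSL2q} to both pieces to force $|S_+|=|S_-|=q(q-1)/2$. The only cosmetic difference is that you translate by $g^{-1}$ on the left while the paper uses $gS_-$, which is immaterial.
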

\begin{proof}
  By Lemma~\ref{boundPGL2q}, we have $|S|=q(q-1)$.  Consider the two sets
  $S_+=S\cap \PSL(2,q)$ and $S_{-}=S\setminus S_+$.  Clearly $S_+$ is
  an independent set in $\Gamma_{\PSL(2,q)}$ and, for $g\in
  G_q\setminus\PSL(2,q)$, the set $gS_{-}$ is also an independent set
  in $\Gamma_{\PSL(2,q)}$. Thus we obtain that $|S_+|=|S_{-}|=q(q-1)/2$ and
  in particular $\lambda_{-1}(S)=|S_+|-|S_{-}|=0$.
\end{proof}

\section{Auxiliary lemmas}\label{aux}

Consider the $\{0,1\}$-matrix $A$, where the rows are indexed by the
elements of $G_{q}$, the columns are indexed by the ordered pairs of
points of $\mathbb{P}_q$ and $A_{g,(p_1,p_2)}=1$ if and only
if $p_1^g=p_2$. In particular, $A$ has $|G_q|=q(q^2-1)$ rows and
$|\mathbb{P}_q|^2=(q+1)^2$ columns.

The entry $( A^T A)_{(p_1,q_1),(p_2,q_2)}$ equals the number of permutations of
$G_{q}$ mapping $p_1$ into $q_1$ and $p_2$ into $q_2$. Since
$G_{q}$ is $2$-transitive, we get by a simple counting argument that 
\[
(A^T A)_{(p_1,q_1),(p_2,q_2)}=
\left\{
\begin{array}{lcl}
q(q-1)&&\textrm{if }p_1=p_2 \textrm{ and }q_1=q_2,\\
q-1&&\textrm{if }p_1\neq p_2
\textrm{ and }q_1\neq q_2,\\ 
0&&\textrm{otherwise.}
\end{array}
\right.
\]
This shows that with the proper ordering of the columns of $A$,
$$A^T A=q(q-1)I_{(q+1)^2}+(q-1)(J_{q+1}-I_{q+1})\otimes
(J_{q+1}-I_{q+1}),$$ 
(in here $I_{q+1},J_{q+1}$ denote the identity
matrix and the all-$1$ matrix of size $q+1$, respectively).  The
matrix $J_{q+1}$ has eigenvalue $0$ (with multiplicity $q$) and $q+1$
(with multiplicity $1$). Hence $(J_{q+1}-I_{q+1})\otimes
(J_{q+1}-I_{q+1})$ has eigenvalue $1$ (with multiplicity $q^2$), $-q$
(with multiplicity $2q$), and $q^2$ (with multiplicity $1$). So, $A^T
A$ is diagonalizable with eigenvalues $q(q-1)+(q-1)q^2=|G_{q}|$ (with
multiplicity $1$), $q(q-1)+(q-1)=q^2-1$ (with multiplicity $q^2$) and
$q(q-1)-(q-1)q=0$ 
(with multiplicity $2q$). This shows that the kernel of $A^T A$ has 
dimension $2q$. We now determine the kernel of $A$. 

Let $V$ be the $\mathbb{C}$-vector space whose basis consists of all
$e_{(x,y)}$, where $(x,y)$ is an ordered pair of elements of
$\mathbb{P}_q$. Consider the following two subspaces of $V$
\begin{align*}
V_1&=\big\langle \sum_{x\in \mathbb{P}_q}(e_{(p_1,x)}-e_{(p_2,x)})
\mid p_1,p_2\in \mathbb{P}_q\big\rangle \\
V_2&=\big\langle \sum_{x\in \mathbb{P}_q}(e_{(x,p_1)}-e_{(x,p_2)})\mid
p_1,p_2\in \mathbb{P}_q\big\rangle. 
\end{align*} 
Note that by construction, $V_1$ and $V_2$ have dimension $q$ and are
$G_q$-modules. As 
$V_1$ and $V_2$ are orthogonal, 
we have $V_1\cap V_2=0$. Using the
 definition of $A$, it is easy to check  that $V_1\oplus V_2$ is
 contained in the 
kernel of $A$. Since the kernel of $A^T A$ has dimension $2q$, we obtain that
$V_1\oplus V_2$ is the kernel of $A$. In particular, we proved the
first part of the following lemma.

\begin{lemma}\label{rankA}The matrix $A$ has rank $q^2+1$ and the
  kernel of $A$   is $V_1\oplus V_2$. Also, the vector space spanned
  by the columns of $A$ equals the
  vector space spanned by the characteristic vectors of the independent
  sets of size $q(q-1)$ of $\Gamma_{G_q}$.
\end{lemma}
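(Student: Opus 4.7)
My strategy is to prove two containments, $U \subseteq W$ and $W \subseteq U$, where $U$ denotes the column space of $A$ and $W$ the span of the characteristic vectors of independent sets of $\Gamma_{G_q}$ of size $q(q-1)$. By the part of the lemma already established, $\dim U = q^2+1$, so if I can show $\dim W \leq q^2+1$ then $U=W$ follows from the inclusion $U \subseteq W$.

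The inclusion $U \subseteq W$ is essentially by construction. The column of $A$ indexed by the ordered pair $(p_1,p_2)$ is the characteristic vector of $T_{p_1,p_2}:=\{g\in G_q : p_1^g=p_2\}$, which is a right coset of the stabilizer of $p_1$ in $G_q$ and thus has size $q(q-1)$. Any two elements $g,h$ in such a coset satisfy $p_2^{g^{-1}h} = p_2$, so $T_{p_1,p_2}$ is an independent set of $\Gamma_{G_q}$; by Lemma~\ref{boundPGL2q} it is of maximal size, so every column of $A$ lies in $W$.

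For $W \subseteq U$ I would bound $\dim W$ using Hoffman's eigenspace description. Let $S$ be any independent set of size $q(q-1)=|G_q|/(q+1)$. Since equality holds in Hoffman's bound, Lemma~\ref{bound} places $v_S - \frac{1}{q+1}v_G$ in the $\tau$-eigenspace $E_\tau$ of $\Gamma_{G_q}$ for $\tau = -q(q-1)/2$; hence $W \subseteq \mathbb{C}v_G + E_\tau$. Combining Lemma~\ref{eigenvalues} with Table~\ref{evaluesPGL2q}, when $q$ is even the character $\psi_1$ is the only one producing $\tau$, so $E_\tau$ coincides with the $\psi_1$-isotypic component of $\mathbb{C}G_q$, has dimension $q^2$, and the inclusion $W \subseteq \mathbb{C}v_G + E_\tau$ immediately yields $\dim W \leq q^2+1$.

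The main obstacle is the odd $q$ case, where $E_\tau$ also contains the one-dimensional $\lambda_{-1}$-isotypic component (spanned by $z := \sum_{g\in G_q} \lambda_{-1}(g)g$), so naively only $\dim W \leq q^2+2$. This is exactly where Lemma~\ref{fix} is needed: the constraint $\lambda_{-1}(S)=0$ is precisely $\langle v_S,z\rangle = 0$ in the standard Hermitian inner product of $\mathbb{C}G_q$, so $v_S - \frac{1}{q+1}v_G$ projects trivially onto the $\lambda_{-1}$-isotypic component and actually lies in the $\psi_1$-isotypic component. In both parities this gives $W \subseteq \mathbb{C}v_G \oplus (\psi_1\text{-isotypic})$, so $\dim W \leq 1+q^2 = q^2+1$, which forces $U=W$ and completes the proof.
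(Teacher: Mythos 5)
Your proposal is correct and follows essentially the same route as the paper: the columns of $A$ are characteristic vectors of point-stabilizer cosets (hence maximal independent sets), and the reverse inclusion comes from Lemma~\ref{bound} placing $v_S$ in the span of $v_G$ and the $\tau$-eigenspace, with Lemma~\ref{fix} eliminating the $\lambda_{-1}$-component when $q$ is odd, so the multiplicities in Table~\ref{evaluesPGL2q} give the dimension bound $q^2+1$.
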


\begin{proof}
The claims on the rank and on the kernel follow from the
previous discussion.

Note that the columns of $A$ are the characteristic vectors of cosets
of stabilizers of points of $G_q$. In particular, these columns are
characteristic vectors of independent sets of size $q(q-1)$ in
$\Gamma_{G_q}$. By taking the sum of all the columns in $A$, we see
that the all-$1$ vector is also in the column space of $A$.  

Let $S$ be any independent set of $\Gamma_{G_q}$ of size $q(q-1)$.  By
Lemma~\ref{bound} and~\ref{boundPGL2q} the characteristic vector of
$S$ is in the direct sum of the $q^2(q-1)/2$-eigenspace and the
$-q(q-1)/2$-eigenspace.

If $q$ is even, from the multiplicities of the eigenvalues in
Table~\ref{evaluesPGL2q} it is clear that the vector space spanned by
the characteristic vectors of independent sets of size $q(q-1)$ has
dimension at most $q^2+1$. So the lemma follows.

If $q$ is odd then Lemma~\ref{fix} yields that $v_S$ is orthogonal to
the eigenspace arising from the character $\lambda_{-1}$. So $v_S$
must lie in the direct sum of the eigenspaces arising from
$\lambda_1$ and $\psi_1$. Again, the multiplicities of the eigenvalues
in Table~\ref{evaluesPGL2q} show that the vector space spanned by the
characteristic vectors of independent sets of size $q(q-1)$ has
dimension at most $q^2+1$ and the lemma follows.
\end{proof}

Now we fix a particular ordering of the rows of $A$ so that the first
row is labelled by the identity element of $G_{q}$, then we label the
next $q^2(q-1)/2$ rows by the derangements of $G_{q}$ and the final
$(q^2-2)(q+1)/2$ 
rows are labelled by the remaining permutations. Similarly, we fix a particular
ordering of the columns of $A$ so that the first $q+1$ columns are
labelled by the 
ordered pairs of the form $(p,p)$ and then the last $q(q+1)$ columns are
labelled by the ordered pairs of the form $(p_1,p_2)$, with $p_1\neq
p_2$. With this ordering, we get that the matrix $A$ is a block matrix. Namely,
\[
A=\left(
\begin{array}{cc}
1&0\\
0&M\\
B&C
\end{array}
\right).
\] 
In particular, the rows of the submatrix $M$ are labelled by the derangements of
$G_{q}$ and the columns of $M$ are labelled by the ordered pairs of
distinct elements of $\mathbb{P}_q$.

Now, set $N=M^T M$. In particular, $N$ is a square $(q+1)q$
matrix whose rows and columns are indexed by the ordered pairs
of distinct points. 

From now on, we identify the elements of $\mathbb{P}_q$ with the 
elements  of the set
$\mathbb{F}_q\cup\{\infty\}$. Namely, the point $[1,a]$ corresponds to
the element $a$ of $\mathbb{F}_q$ and the point $[0,1]$ corresponds to
the element $\infty$. Now, given four distinct points
$\alpha,\beta,\gamma,\delta$, we recall that the \emph{cross-ratio},
denoted by $\crr(\alpha,\delta,\gamma,\beta)$, is defined
by 
$$\frac{\alpha-\gamma}{\alpha-\beta}\frac{\delta-\beta}{\delta-\gamma}. $$
We recall that the cross-ratio is $G_q$-invariant, i.e. if
$g\in G_q$ and $\alpha,\beta,\gamma,\delta\in \mathbb{P}_q$, then
$\crr(\alpha^g,\delta^g,\gamma^g,\beta^g)=\crr(\alpha,\delta,\gamma,\beta)$.

In the following proposition we prove that the entries of the matrix
$N$ are determined by the cross-ratio.
 
\begin{proposition}\label{crossratio}
If $q$ is even, then
\[
N_{(\alpha,\beta),(\gamma,\delta)}=
\left\{
\begin{array}{lcl}
q(q-1)/2&&\textrm{if }\alpha=\gamma \textrm{ and }\beta=\delta,\\
0&&\textrm{if }\alpha=\gamma \textrm{ and }\beta\neq \delta,\\
0&&\textrm{if }\alpha\neq\gamma \textrm{ and }\beta=\delta,\\
0&&\textrm{if }\alpha=\delta \textrm{ and }\beta=\gamma,\\
q/2&&\textrm{otherwise.}
\end{array}
\right.
\]
If $q$ is odd, then
\[
N_{(\alpha,\beta),(\gamma,\delta)}=
\left\{
\begin{array}{lcl}
q(q-1)/2&&\textrm{if }\alpha=\gamma \textrm{ and }\beta=\delta,\\
0&&\textrm{if }\alpha=\gamma \textrm{ and }\beta\neq \delta,\\
0&&\textrm{if }\alpha\neq\gamma \textrm{ and }\beta=\delta,\\
(q-1)/2&&\textrm{if }\crr(\alpha,\delta,\gamma,\beta) \text{ is a
  square in }\mathbb{F}_q,\\
(q+1)/2&&\textrm{if }\crr(\alpha,\delta,\gamma,\beta) \text{ is not a
  square in }\mathbb{F}_q.\\
\end{array}
\right.
\]
\end{proposition}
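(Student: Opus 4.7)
The entry $N_{(\alpha,\beta),(\gamma,\delta)}$ equals the number of derangements $g\in G_q$ satisfying $\alpha^g=\beta$ and $\gamma^g=\delta$. The trivial cases are immediate: if $\alpha=\gamma,\beta\neq\delta$ or $\alpha\neq\gamma,\beta=\delta$ no bijection can meet both constraints, so $N=0$; and if $\alpha=\gamma,\beta=\delta$, the $2$-transitivity of $G_q$ on ordered pairs of distinct points in $\mathbb{P}_q$ makes $N$ constant in such pairs, and a double count using $|D|=q^2(q-1)/2$ gives $N=|D|(q+1)/((q+1)q)=q(q-1)/2$.

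For the main case $\alpha\neq\gamma$ and $\beta\neq\delta$, I would use $2$-transitivity to normalize $(\alpha,\gamma)=(0,\infty)$ in $\mathbb{P}_q=\mathbb{F}_q\cup\{\infty\}$. For $\beta\in\mathbb{F}_q^*$ and $\delta\in\mathbb{F}_q$ with $\delta\neq\beta$, the $q-1$ M\"obius transformations sending $0\mapsto\beta$ and $\infty\mapsto\delta$ form the pencil $g_d(z)=(\delta z+\beta d)/(z+d)$, $d\in\mathbb{F}_q^*$; since $g_d(\infty)=\delta\neq\infty$, $g_d$ is a derangement iff its fixed-point equation $z^2+(d-\delta)z-\beta d=0$ has no root in $\mathbb{F}_q$. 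The boundary configurations $\beta=\infty$ (namely $\beta=\gamma$) and the swap case $\alpha=\delta,\beta=\gamma$ are parametrized in the same spirit by $g(z)=(\delta z+b)/z$ and $g(z)=\lambda/z$, respectively.

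For $q$ odd, the above quadratic has no $\mathbb{F}_q$-root iff its discriminant $\Delta(d)=(d-\delta)^2+4\beta d$ is a non-square. Using the standard identity $\sum_{d\in\mathbb{F}_q}\chi(f(d))=-1$ for any monic quadratic $f\in\mathbb{F}_q[d]$ without a repeated root (with $\chi$ the quadratic character, $\chi(0)=0$), I would evaluate $\sum_{d\in\mathbb{F}_q^*}\chi(\Delta(d))$ and count the zeros of $\Delta$ in $\mathbb{F}_q^*$ (equal to $2$ or $0$ according to whether $\beta(\beta-\delta)$ is a nonzero square) to conclude $N\in\{(q-1)/2,(q+1)/2\}$ correspondingly. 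A one-line calculation gives $\crr(\alpha,\delta,\gamma,\beta)=(\beta-\delta)/\beta$ in these coordinates, so $\chi(\beta(\beta-\delta))=\chi(\crr(\alpha,\delta,\gamma,\beta))$ and the dichotomy matches the cross-ratio condition in the proposition. The degenerate subcases (including the swap) all yield $\crr=1$ and hence $N=(q-1)/2$, which one also verifies directly via $z^2=\lambda$.

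For $q$ even, the discriminant criterion is replaced by the Artin--Schreier one: for $b\neq 0$, the quadratic $z^2+bz+c$ splits over $\mathbb{F}_q$ iff $\mathrm{Tr}(c/b^2)=0$, where $\mathrm{Tr}\colon\mathbb{F}_q\to\mathbb{F}_2$ is the absolute trace. After substituting $u=\delta/(d+\delta)$, the derangement condition reduces to $\mathrm{Tr}((a+a^{1/2})u)=1$ for a nonzero $a=\beta/\delta$; since $\mathrm{Tr}$ is $\mathbb{F}_2$-linear and surjective, exactly $q/2$ elements of $\mathbb{F}_q$ satisfy it, and a check shows the two excluded values of $u$ lie in the trace-zero hyperplane, so $N=q/2$. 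The swap case vanishes because in characteristic $2$ every $\lambda\in\mathbb{F}_q^*$ has a unique square root, so $g(z)=\lambda/z$ has exactly one fixed point. The main technical obstacle throughout is the identification of $\chi(\Delta(d))$ with $\chi(\crr(\alpha,\delta,\gamma,\beta))$ in the odd case; everything else is $2$-transitivity together with standard finite-field character-sum or trace identities.
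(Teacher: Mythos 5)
Your plan is correct and the computations that matter check out, but the counting engine is genuinely different from the paper's. Both arguments reduce, after the trivial cases and the swap case (which you and the paper treat identically, via $z^2=\lambda$ resp. $t^2-\lambda$), to deciding irreducibility of a quadratic over $\mathbb{F}_q$ in a one-parameter pencil of maps sending $(\alpha,\gamma)$ to $(\beta,\delta)$. The paper normalizes three points by $3$-transitivity ($\alpha=0,\beta=1,\gamma=\infty,\delta=d$), parametrizes the \emph{reducible} characteristic polynomials by a root $a$, and counts the image of the explicit rational map $\varphi(a)=(a-a^2)/(d-1-ad)$ by analyzing its fibers; the cross-ratio enters as $1-d$ through the discriminant of the fiber equation. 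You instead normalize only $(\alpha,\gamma)=(0,\infty)$ by $2$-transitivity and count irreducible members directly: for $q$ odd via the standard character-sum identity $\sum_d\chi(\Delta(d))=-1$ for the separable discriminant $\Delta(d)=(d-\delta)^2+4\beta d$ (whose own discriminant $16\beta(\beta-\delta)$ matches $\chi(\crr)= \chi((\beta-\delta)/\beta)$, exactly as needed), and for $q$ even via the Artin--Schreier trace criterion, where your substitution $u=\delta/(d+\delta)$ correctly turns the condition into $\mathrm{Tr}((a+a^{1/2})u)=1$ with $a+a^{1/2}\neq0$ and both excluded values $u=0,1$ of zero trace. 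Your route buys a cleaner conceptual reason for the square/non-square dichotomy (it is literally the quadratic character of the discriminant, hence of the cross-ratio) at the price of standard facts about character sums and traces; the paper's route is more elementary and self-contained but more ad hoc. Two loose ends you should tidy when writing this up: when $\delta=0$ (the case $\alpha=\delta$) your even-$q$ substitution $a=\beta/\delta$ is undefined and the zero count of $\Delta$ on $\mathbb{F}_q^*$ is $1$, not $2$ (the correction is compensated by $\chi(\Delta(0))=0$, so the answer $(q-1)/2$ survives, but say so or use the symmetry of $N$ to force $\alpha\neq\delta$); and the boundary case $\beta=\gamma$ and the swap should be verified by the same short computation rather than inferred from ``$\crr=1$,'' which would be circular as phrased.
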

\begin{proof}
Note that  $N_{(\alpha,\beta),(\gamma,\delta)}$ is the number of
  derangements mapping $\alpha$ to $\beta$ and $\gamma$ to
  $\delta$. Write $n$ for $N_{(\alpha,\beta),(\gamma,\delta)}$. If
  $\alpha=\gamma$ and $\beta=\delta$, then 
  $n$ is the number of derangements mapping $\alpha$ to
  $\beta$. Since $G_q$ is transitive of degree $q+1$ and since
  $G_q$    contains $q^2(q-1)/2$ derangements, we have
  $n=q^2(q-1)/2q=q(q-1)/2$.   

If $\alpha=\gamma$ and $\beta\neq \delta$ or if $\alpha\neq
  \gamma$ and $\beta=\delta$, then clearly $n=0$.

From now on we can assume that $\alpha\neq \gamma$ and $\beta\neq
\delta$. Assume $\alpha=\delta$ and
  $\beta=\gamma$. Since $G_q$ is $2$-transitive, without loss of
generality, we can assume that $\alpha=0$ and $\beta=\infty$. In this
case, the elements $g$ such that $0^g=\infty$ and $\infty^g=0$ are the
matrices of the form
\[
g= \left[
\begin{array}{cc}
0&\lambda\\
1&0
\end{array}
\right] \quad \textrm{ with } \lambda\in\mathbb{F}_q^*.
\]  
Further, $g$ is a derangement if and only if $g$ has no eigenvalue in
$\mathbb{F}_q$, i.e.  the characteristic polynomial
$p_\lambda(t)=t^2-\lambda$ of $g$ is irreducible over
$\mathbb{F}_q$. If $q$ is even, then $p_\lambda(t)$ is reducible for
every value of $\lambda$, and so $n=0$. If $q$ is odd, then
$\mathbb{F}_q^*$ has $(q-1)/2$ non-square elements. Thence there exist
$(q-1)/2$ values of $\lambda$ such that $p_\lambda(t)$ is irreducible
over $\mathbb{F}_q$, and so $n=(q-1)/2$. 
Note that $\crr(\alpha,\delta,\gamma,\beta)=\crr(0,0,\infty,\infty)=1$
is a square in $\mathbb{F}_q$.

{}From now on we can assume that $|\{\alpha,\beta,\gamma,\delta\}|\geq
3$.

As $N$ is symmetric, up to interchanging the pairs 
$(\alpha,\beta)$, $(\gamma,\delta)$, we may
assume that $\beta\neq \gamma$.  Since $G_q$ is $3$-transitive, without
loss of generality, we may assume that $\alpha=0$, $\beta=1$,
$\gamma=\infty$ and $\delta=d$, for some $d\in
\mathbb{F}_q\setminus\{1\}$. The elements $g$ such that $0^g=1$ 
and $\infty^g=d$ are the matrices of the form
\[
\left[
\begin{array}{cc}
1&1 \\
\lambda&\lambda d
\end{array}
\right] \quad \textrm{with } \lambda\in\mathbb{F}_q^*. 
\]  
The permutation $g$ is a derangement if and only if $g$ has no
eigenvalue in $\mathbb{F}_q$, i.e. the characteristic polynomial
$p_\lambda(t)=t^2-(1+\lambda d)t+\lambda d-\lambda$ of $g$ is
irreducible over $\mathbb{F}_q$. Now, we determine the number of
values of $\lambda$ such that $p_\lambda(t)$ is irreducible. Assume
that $p_\lambda(t)$ is reducible over $\mathbb{F}_q$ with roots
$a,b\in\mathbb{F}_q$. So, $p_\lambda(t)=(t-a)(t-b)$. This yields
$a+b=1+\lambda d$ and $ab=\lambda d-\lambda$. We get
$b=(1+\lambda-a)/(1-a)$ (note that $a$ cannot be $1$, because
otherwise $\lambda d-\lambda=b=\lambda d$, which yields $\lambda=
0$). From this we obtain $\lambda=(a-a^2)/(d-1-ad)$ (note that
$d-1-ad\neq 0$, because otherwise $a\in\{0,1\}$, which yields
$\lambda=0$). Consider the function $\varphi:\mathbb{F}_q\backslash
X\to \mathbb{F}_q$, where $\varphi(a)=(a-a^2)/(d-1-ad)$ and
$X=\{a\mid d-1-ad= 0\}$. Note that $X=\{1-d^{-1}\}$ if $d\neq 0$, and
$X=\emptyset$ if $d=0$. 

We have proved so far that $p_\lambda(t)$ is reducible if and only if
$\lambda$ lies in the image of $\varphi$. We now compute the size of
$\Imm\varphi$. It is easy to check that $\varphi(a_1)=\varphi(a_2)$ if
and only if $a_2=a_1$ or $a_2=(1-a_1)(d-1)/(d-1-a_1d)$. This shows
that the fiber of $\varphi(a_1)$ contains two points if $a_1\neq
(1-a_1)(d-1)/(d-1-a_1d)$ and only one point if $a_1=
(1-a_1)(d-1)/(d-1-a_1d)$. Note that $a_1=(1-a_1)(d-1)/(d-1-a_1d)$ if
and only if $a_1^2d-2(d-1)a_1+(d-1)=0$.  

If $q$ is even and $d\neq 0$, then this happens if $a_1^2=d^{-1}-1$
(i.e. for a unique value of $a_1$). Thence the image of $\varphi$
contains $(q-1-1)/2+1=q/2$ elements, and $n=|\mathbb{F}_q\setminus
\Imm\varphi|=q/2$. Similarly, if $q$ is even and $d=0$, the image of
$\varphi$ contains $q/2$ elements, and
$n=|\mathbb{F}_q\setminus\Imm\varphi|=q/2$.  

If $q$ is odd, then $a_1^2d-2(d-1)a_1+(d-1)=0$ if the discriminant
$(d-1)^2-d(d-1)=(1-d)$ is a square (in this case there are two
distinct solutions for $a_1$).  So, if $1-d$ is a square, then the
image of $\varphi$ contains $(q-1-2)/2+2=(q+1)/2$ elements and so
$n=|\mathbb{F}_q\setminus\Imm\varphi|=(q-1)/2$. But, if $1-d$ is not a
square, then the image of $\varphi$ contains $(q-1)/2$ elements and so
$n=|\mathbb{F}_q\setminus\Imm\varphi|=(q+1)/2$. 
Finally, we note that $\crr(0,d,\infty,1)=1-d$.
\end{proof}

In the next proposition we  use the character table of $G_q$ to
find the rank of $M$. 
\begin{proposition}\label{rankM}The matrix $M$ has rank $q(q-1)$.
\end{proposition}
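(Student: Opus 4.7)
The plan is to compute $\mathrm{rank}(M)$, equivalently $\mathrm{rank}(N)$, by viewing $N=M^TM$ as a $G_q$-equivariant self-adjoint endomorphism of the permutation module $P=\mathbb{C}[\mathbb{P}_q^{(2)}]$ (with $G_q$ acting diagonally on ordered distinct pairs) and identifying $\ker N$ with the $\psi_1$-isotypical component of $P$.

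First I would produce a $2q$-dimensional subspace of $\ker M$. For each $p\in\mathbb{P}_q$ set $w_p^{(1)}=\sum_{x\neq p}e_{(p,x)}$ and $w_p^{(2)}=\sum_{x\neq p}e_{(x,p)}$. Since every derangement $g$ satisfies $p\neq p^g$, a one-line calculation gives $(Mw_p^{(i)})_g=1$ for every $g\in D$ and $i\in\{1,2\}$, so $Mw_p^{(i)}$ is the all-ones vector on $D$ and every difference $w_p^{(i)}-w_{p'}^{(i)}$ lies in $\ker M$. Put $W_i'=\{\sum_pa_pw_p^{(i)}:\sum_pa_p=0\}$; comparing the coefficient of a single pair $e_{(\alpha,\beta)}$ in an element of $W_1'\cap W_2'$ forces both coefficient sequences to be constant and (by the vanishing-sum condition) zero, so $W_1'\cap W_2'=0$. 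Therefore $W_1'\oplus W_2'\subseteq\ker M$ has dimension $2q$, yielding $\mathrm{rank}(M)\leq q(q+1)-2q=q(q-1)$.

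Next I would match this bound using character theory. Writing $P\cong\mathrm{Ind}_H^{G_q}(1)$, where $H$ is the cyclic subgroup of order $q-1$ stabilizing an ordered pair, Frobenius reciprocity combined with the character table from Section~\ref{pgl2q} yields the multiplicity of each $\chi\in\Irr(G_q)$ in $P$: $\psi_1$ appears with multiplicity~$2$, while $\lambda_1$, each $\eta_\beta$, each $\nu_\gamma$, and (for $q$ odd) $\psi_{-1}$ each appear with multiplicity~$1$, and $\lambda_{-1}$ does not appear. Since $W_i$ is a copy of the permutation module $\mathbb{C}[\mathbb{P}_q]\cong\lambda_1\oplus\psi_1$, its submodule $W_i'$ is a copy of $\psi_1$, and a dimension count identifies $W_1'\oplus W_2'$ as the entire $\psi_1$-isotypical component of $P$. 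Because $N$ commutes with the $G_q$-action, Schur's lemma forces $N$ to act as a scalar $\alpha_\chi$ on every multiplicity-one isotypical component of $P$; it remains to check that $\alpha_\chi\neq 0$ for each such $\chi$ other than $\psi_1$.

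For these scalars I would use the explicit formula for $N$ given in Proposition~\ref{crossratio}. For $\chi=\lambda_1$ a direct count of row sums gives $N\mathbf{1}_P=(q(q^2-1)/2)\mathbf{1}_P$, so $\alpha_{\lambda_1}\neq 0$. For each other $\chi$ I would build a concrete generator of the $\chi$-isotypical component from an $H$-fixed vector in $V_\chi$ (whose existence is guaranteed by the Frobenius computation) and apply $N$ to it, using the orthogonality identities $\sum_{x\in\mathbb{F}_q^*}\gamma(x)=0$ and $\sum_r\beta(r)=0$ recorded at the end of Section~\ref{pgl2q}, together with the quadratic-residue split of the cross-ratio entries in the odd case. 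The main obstacle lies exactly here: explicitly verifying $\alpha_{\eta_\beta}\neq 0$ and $\alpha_{\nu_\gamma}\neq 0$ (and, for $q$ odd, $\alpha_{\psi_{-1}}\neq 0$) requires careful bookkeeping of the interaction between the field character $\gamma$ or $\beta$ and the Legendre character appearing in the cross-ratio values. Once all these scalars are shown to be nonzero, $\ker N=W_1'\oplus W_2'$ and $\mathrm{rank}(M)=q(q-1)$.
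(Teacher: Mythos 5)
Your framework is sound and is essentially the paper's: decompose the permutation module on ordered distinct pairs into isotypic components, note that the $\psi_1$-isotypic part (your $W_1'\oplus W_2'$) lies in $\ker N$ and has dimension $2q$, use equivariance of $N$ and Schur's lemma to see that $N$ acts by a scalar $s_\chi$ on each remaining (multiplicity-one) component, and reduce the proposition to $s_\chi\neq 0$ for all $\chi\neq\psi_1$. Your upper bound $\mathrm{rank}(M)\le q(q-1)$ and the identification of $W_1'\oplus W_2'$ with the full $\psi_1$-isotypic component are correct.

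However, there is a genuine gap: you never prove the nonvanishing $s_{\eta_\beta}\neq 0$, $s_{\nu_\gamma}\neq 0$ and (for $q$ odd) $s_{\psi_{-1}}\neq 0$, and you explicitly defer it as ``the main obstacle.'' This is not a routine bookkeeping step; it is the analytic core of the proposition, and it cannot be waved through, since the analogous scalar \emph{does} vanish for $\psi_1$, so some specific property of each character must enter. The paper carries this out by evaluating $(Nv_\chi)_{(0,\infty)}$ for the generator $v_\chi=\sum_{g}\chi(g^{-1})e_{(0^g,\infty^g)}$ (the same $H$-fixed construction you propose), getting $(v_\chi)_{(0,\infty)}=q-1$ by Frobenius reciprocity. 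For $q$ even this yields exact values $s_{\nu_\gamma}=q(q-1)/2$ and $s_{\eta_\beta}=q(q+1)/2$. For $q$ odd the paper does \emph{not} compute $s_\chi$ exactly (the quadratic-residue structure of the cross-ratio makes that delicate); instead it isolates the contribution of the pairs $(a,b)$ with $\crr(0,b,a,\infty)$ a non-square and bounds it: each coset $Tg_{ab}$ over such a pair contains exactly $(q+1)/2$ elements conjugate to some $v_r$ and hence at most $(q-3)/2$ conjugate to some $d_x$; combining this with $|\chi(g)|\le 2$ on the relevant classes handles $\nu_\gamma$ and $\psi_{-1}$, while for $\eta_\beta$ the strict inequality requires the extra observation that $\chi(v_r)=-2$ only when $r\mathbb{F}_q^*\in\Ker\beta$ and $|\Ker\beta|<(q+1)/2$ because $\beta$ has order greater than $2$. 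Until you supply estimates of this kind (or an exact evaluation), your argument establishes only $\mathrm{rank}(M)\le q(q-1)$, not the equality claimed in the proposition.
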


\begin{proof}
Let $\Omega$ be the set of ordered pairs of distinct elements of
$\mathbb{P}_q$ and $V$ be the vector space with  basis
$\{e_\omega\}_{\omega\in \Omega}$. Clearly, $V$ is a
$G_q$-module. Namely, $V$ is the permutation module of the action of
$G_q$ on $\Omega$. Let $\pi$ be the character afforded by $V$, so
\[
\pi(g)=|\{\omega\in\Omega\mid \omega^g=\omega\}|.
\]

We have $\pi(1_{G_q})=q(q+1)$, $\pi(g)=2$ for every element $g$
conjugate to $d_x$ (for some $x$) and $\pi(g)=0$ otherwise. As
$\pi=\sum_{\chi\in\Irr(G_q)}\langle \chi,\pi\rangle \chi$, by direct
calculation of $\langle \chi,\pi\rangle$ with the Tables~\ref{Tb2}
and~\ref{Tb1}, we get that
\begin{align*}
\pi&=\lambda_1+2\psi_1+\sum_{\beta}\eta_\beta+\sum_{\gamma}\nu_\gamma,
\quad\textrm{for }q\textrm{ even},\\
\pi&=\lambda_1+2\psi_1+\psi_{-1}+\sum_{\beta}\eta_\beta+\sum_{\gamma}\nu_\gamma,
\quad\textrm{for }q\textrm{ odd}.
\end{align*}
Let $C\subseteq \Irr(G_q)$ be the set of constituents of
$\pi$. Then $V=\oplus_{\chi\in C}V_\chi$, where $V_\chi$ is an irreducible
$G_q$-submodule of $V$, unless $\chi=\psi_1$, and $V_{\psi_1}$ is the sum of
two isomorphic irreducible $G_q$-submodules of $V$ of dimension $q$. Clearly,
$V_{\psi_1}\cong V_1\oplus V_2$ (see Lemma~\ref{rankA}).

Again we use the matrix $N = M^T M$ and in order to prove that
$M$ has rank $q(q-1)$ it suffices to prove that $N$ has rank $q(q-1)$.
By Lemma~\ref{rankA}, we get that $V_{\psi_1}$ is contained in the
kernel of $N$. Also, as
$N_{\omega_1^g,\omega_2^g}=N_{\omega_1,\omega_2}$ for every $g\in
G_q$, we obtain that every eigenspace of $N$ is a $G_q$-submodule of
$V$.  Therefore, since for $\chi\neq \psi_1$ the module $V_{\chi}$
is irreducible, we get that $V_\chi$ is an eigenspace of $N$. Thus, to
conclude the proof it suffices to show that the eigenvalue $s_\chi$ of
the eigenspace $V_\chi$ is not zero, for all $\chi\neq \psi_1$.

By Wedderburn Theorem~\cite{Is}, we get that
$\mathbb{C}G_q=\oplus_{\chi\in\Irr(G)}I_\chi$, where $I_\chi$ are
minimal two-sided ideals of the semisimple algebra
$\mathbb{C}G_q$. Also, each $I_\chi$ is generated (as an ideal) by the
idempotent
$E_\chi=\frac{\chi(1)}{|G_q|}\sum_{g\in
  G}\chi(g^{-1})g$. Set $v_\chi=\sum_{g\in
  G_q}\chi(g^{-1})e_{(0^g,\infty^g)}$. 
As $V_\chi=VI_\chi$ and
$v_\chi=\frac{|G_q|}{\chi(1)}e_{(0,\infty)}E_\chi\in V_\chi$,  we obtain
that $v_\chi$ is an eigenvector of $N$ with eigenvalue
$s_\chi$. Note that, given $\chi\in C$, we have 
\begin{eqnarray*}
(\dag)&&(Nv_\chi)_{(0,\infty)}=\!\!\sum_{(a,b)\in
  \Omega}\!N_{(0,\infty),(a,b)}(v_\chi)_{(a,b)}=\sum_{(a,b)\in
  \Omega}
\!\sum_{\stackrel{g \textrm{ s.t. }}{0^g=a,\infty^g=b}}
\!\!\!\chi(g^{-1})N_{(0,\infty),(a,b)}\\
(\ddag)&&(v_\chi)_{(0,\infty)}=
\!\!\sum_{\stackrel{g \textrm{ s.t.}}{0^g=0,\infty^g=\infty}}
\!\!\!\chi(g^{-1})=(q-1)\langle
\mathrm{Res}_T(\chi),1\rangle=(q-1)\langle\chi,\pi\rangle=q-1,
\end{eqnarray*}
where  $T$ is the stabilizer in $G_q$ of $0,\infty$ and 
$\mathrm{Res}_T(\chi)$ is the restriction of $\chi$ to $T$ (note that
in the fourth equality in $(\ddag)$ we are using Frobenius Reciprocity).

In the rest of the proof, 
we do not determine (for $q$ odd) the eigenvalue $s_\chi$ of $v_\chi$, but we
simply prove that $s_\chi>0$, for $\chi\neq \psi_1$. If $\chi=\lambda_1$, then
by $(\ddag)$ the vector $v_\chi$ is $(q-1)$ times the all-$1$ vector. 
By Proposition~\ref{crossratio},
 $N$ is a stochastic
matrix with row sum $q(q^2-1)/2$, so $s_{\lambda_1}=q(q^2-1)/2$. Now,
for the remaining characters in $C$ we distinguish two cases 
depending on whether $q$ is even or $q$ is odd.

Assume $q$ even. Let $\chi\in C$, with $\chi\neq \lambda_1,\psi_1$.
 Now, as $\sum_{g\in G_q}\chi(g^{-1})=0$, subtracting
 $q/2\sum_{g\in G_q}\chi(g^{-1})$ from $(\dag)$ and using
 Proposition~\ref{crossratio}, we get 
\begin{eqnarray}\label{Eq1}\nonumber
(Nv_\chi)_{(0,\infty)}&=&\frac{q^2}{2}\sum_{g\in
   T}\chi(g^{-1})\\
&-&\frac{q}{2}\Bigg(
\sum_{
\scriptsize
\begin{array}{c}
g \textrm{ s.t.   }\\
0^g=0
\end{array}
\normalsize
}\chi(g^{-1})+
\!\!\sum_{
\scriptsize
\begin{array}{c}
g \textrm{ s.t.   }\\
\infty^g=\infty
\end{array}
\normalsize
}\chi(g^{-1})
+
\sum_{
\scriptsize
\begin{array}{c}
g \textrm{ s.t.   }\\
0^g=\infty \\
\infty^g=0
\end{array}
\normalsize
}\!\!\!
\chi(g^{-1}) 
\Bigg).
\end{eqnarray}
On the right-hand side of Equation~\ref{Eq1}, the first, the second 
and the third summands are 
\begin{eqnarray*}
\frac{q^2}{2}(q-1)\langle \textrm{Res}_T(\chi),1\rangle
&=&\frac{q^2(q-1)}{2}\langle \chi,\pi\rangle=\frac{q^2(q-1)}{2},\\
\frac{q}{2}q(q-1)\langle \textrm{Res}_{(G_q)_0}(\chi),1\rangle
&=&\frac{q^2(q-1)}{2}\langle \chi,\lambda_1+\psi_1\rangle=0,\\
\frac{q}{2}q(q-1)\langle \textrm{Res}_{(G_q)_\infty}(\chi),1\rangle
&=&\frac{q^2(q-1)}{2}
\langle \chi,\lambda_1+\psi_1\rangle=0.\\
\end{eqnarray*}
Also, if $g$ is a permutation such that $0^g=\infty$ and $\infty^g=0$, then 
$g$ has order $2$. Therefore $g$ is conjugate to $u$ and so the fourth 
summand in Equation~\ref{Eq1} is $\frac{q(q-1)}{2}\chi(u)$. Summing
up, we obtain 
\[
(N_{v_\chi})_{(0,\infty)}=\left\{
\begin{array}{ccc}
\frac{(q^2-1)q}{2} &&\textrm{if }\chi=\eta_\beta,\\
\frac{(q-1)^2q}{2} &&\textrm{if }\chi=\nu_\gamma.
\end{array}
\right.
\]
Hence,  $(\ddag)$ yields  $s_{\nu_\gamma}=q(q-1)/2>0$ (for every $\gamma$) and 
$s_{\eta_\beta}=q(q+1)/2>0$ (for every $\beta$).

We point out that the matrix $N$ has only $4$ eigenvalues and actually
$N$ is the matrix of an association scheme of rank $4$, see~\cite{QX}
for more details.

Assume $q$ odd.  Let $\chi\in C$, with $\chi\neq
\lambda_1,\psi_1$. Now, as $\sum_{g\in G_q}\chi(g^{-1})=0$, subtracting
$(q-1)/2\sum_{g\in G_q}\chi(g^{-1})$ from $(\dag)$ and using
Proposition~\ref{crossratio}, we get
\begin{eqnarray}\label{Eq2}\nonumber
(Nv_\chi)_{(0,\infty)}&=&\frac{q^2-1}{2}\sum_{g\in T}\chi(g^{-1})-\frac{q-1}{2}
\Bigg(
\sum_{
\scriptsize
\begin{array}{c}
g \textrm{ s.t. }\\
0^g=0\\
\end{array}
\normalsize
}\chi(g^{-1})+
\sum_{
\scriptsize
\begin{array}{c}
g \textrm{ s.t. }\\
\infty^g=\infty\\
\end{array}
\normalsize
}\chi(g^{-1})\Bigg)\\
&&+\sum_{
\scriptsize
\begin{array}{c}
(a,b) \in \Omega\\
\crr(0,b,a,\infty) \textrm { not square}\\
\end{array}
\normalsize
}
\sum_{
\scriptsize
\begin{array}{c}
g \textrm{ s.t. }\\
0^g=a,\infty^g=b\\
\end{array}
\normalsize
}\chi(g^{-1}).
\end{eqnarray}
Arguing as in the case of $q$ even, we get that the first three 
summands in Equation~\ref{Eq2} are $(q^2-1)(q-1)/2$, $0$ and $0$.
Now, consider the subset
$\Delta=\{(a,b)\in\Omega\mid \crr(0,b,a,\infty) \textrm{ not a
  square}\}$ of $\Omega$ and the subset $S=\{g\in
G_q\mid (0,\infty)^g\in\Delta\}$ of $G_q$. Since
there are $(q-1)^2/2$ elements in $\Delta$ we have that $|S|=(q-1)^3/2$. 

If $\chi=\psi_{-1}$, then $|\chi(g^{-1})|\in\{0,1\}$, for every $g\in
S$. From Equation~\ref{Eq2} and the previous paragraph, we have
$(Nv_\chi)_{(0,\infty)}\geq (q^2-1)(q-1)/2-|S|>0$, so $s_{\psi_{-1}}>0$. 

If $(a,b)$ is in $\Delta$ and $g_{ab}$ is in $G_q$ such that 
$(0,\infty)^{g_{ab}}=(a,b)$,
then the set of elements of $G_q$ mapping $(0,\infty)$ to $(a,b)$ is the
coset $Tg_{ab}$. Since, $\crr(0,b,\infty,a)=\crr(0,b,a,\infty)^{-1}$
and $(a,b)\in\Delta$, we see that $\crr(0,b,\infty,a)$ is not a
square. So, by  
Proposition~\ref{crossratio}, we get that $Tg_{ab}$ contains exactly
$(q+1)/2$ elements conjugate to $v_r$, for some $r$. Therefore $Tg_{ab}$ 
contains at most $(q-3)/2$ elements conjugate to $d_x$, for some $x$.

Assume $\chi=\nu_\gamma$. Since $\chi(g)=0$ if $g$ is conjugate to $v_r$ 
(for some $r$), $|\chi(g)|\leq 2$ if $g$ is conjugate to $d_x$ 
(for some $x$) and $|\Delta|=(q-1)^2/2$, we obtain that the last summand
on the right-hand side of  Equation~\ref{Eq2} is greater than or equal to
$$\frac{(q-1)^2}{2}\cdot \frac{q-3}{2}\cdot
(-2)=-\frac{(q-1)^2(q-3)}{2}>-\frac{(q^2-1)(q-1)}{2}.$$ 
Hence $s_{\nu_\gamma}>0$.

Assume $\chi=\eta_\beta$. Now $|\chi(g)|\leq 2$ if $g$ is conjugate 
to $v_r$ (for some $r$), $\chi(g)=0$ if $g$ is conjugate to $d_x$ 
(for some $x$) and $|\Delta|=(q-1)^2/2$. Checking Table~\ref{Tb1}, we see  
that $-2=\chi(v_r)=-\beta(r)-\beta(r^{-1})$ if and only if
$r\mathbb{F}_q^*\in \Ker{\beta}$. Since $\beta$ has order greater than
$2$, we get $|\Ker\beta|<(q+1)/2$. Hence the last summand on 
the right-hand side of Equation~\ref{Eq2} is greater than 
$$\frac{(q-1)^2}{2}\cdot \frac{q+1}{2}\cdot
{(-2)}=-\frac{(q^2-1)(q-1)}{2}.$$  So, $s_{\eta_\beta}>0$. 
\end{proof}

Now, we construct a
submatrix $\overline{A}$ of $A$. In the submatrix 
$\overline{A}$, we keep all the
rows of $A$ and we delete the columns indexed by the ordered pairs
$(\infty,t)$, $(t,\infty)$, for every $t\in\mathbb{F}_q$. In
particular, we get  
\[
\overline{A}=\left(
\begin{array}{cc}
1&0\\
0&\overline{M}\\
B&\overline{C}
\end{array}
\right),
\] 
where the matrix $\overline{M}$ and $\overline{C}$ are obtained by
deleting the appropriate columns of $M$ and $C$. The matrix
$\overline{A}$ has $q^2+1$ columns and $\overline{M}$ has $q(q-1)$
columns. 

\begin{proposition}\label{rankMbar}We have
  $\mathrm{rank}(A)=\mathrm{rank}(\overline{A})$ and
  $\overline{M}$ has full column rank.  
\end{proposition}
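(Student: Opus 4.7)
The plan is to deduce both assertions from trivial-kernel arguments combined with the explicit descriptions of $\Ker(A)$ (from Lemma~\ref{rankA}) and $\Ker(M)$ (from the proof of Proposition~\ref{rankM}). I would first observe that $\overline{A}$ has exactly $q^2+1 = \mathrm{rank}(A)$ columns, so $\mathrm{rank}(\overline{A}) = \mathrm{rank}(A)$ is equivalent to $\overline{A}$ having trivial column kernel; likewise, $\overline{M}$ has $q(q-1) = \mathrm{rank}(M)$ columns, so full column rank of $\overline{M}$ amounts to $\Ker(\overline{M}) = 0$. In either setting, a column-kernel vector of the restricted matrix extends by zeros on the removed coordinates to a column-kernel vector of the larger matrix that vanishes on the $2q$ columns indexed by $(\infty,t)$ and $(t,\infty)$ for $t \in \mathbb{F}_q$.

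Next, I would use that by Lemma~\ref{rankA}, $\Ker(A) = V_1 \oplus V_2$ consists of the vectors $v \in \mathbb{C}^{(q+1)^2}$ of the form $v_{(p,x)} = f(p) + g(x)$ with $\sum_{p \in \mathbb{P}_q} f(p) = \sum_{x \in \mathbb{P}_q} g(x) = 0$. The same parametrization (restricted to off-diagonal pairs) describes $\Ker(M) = V_{\psi_1}$ from Proposition~\ref{rankM}: $V_{\psi_1}$ is precisely the image under the $G_q$-equivariant restriction of $V_1 \oplus V_2$ to off-diagonal coordinates, since both sides are $2q$-dimensional and of $2\psi_1$-isotypic type. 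I would then impose the vanishing $f(\infty) + g(t) = 0$ and $f(t) + g(\infty) = 0$ for all $t \in \mathbb{F}_q$, yielding $g(t) = -f(\infty)$ and $f(t) = -g(\infty)$ on $\mathbb{F}_q$. Combined with the sum-zero conditions, this gives $f(\infty) = q\, g(\infty)$ and $g(\infty) = q\, f(\infty)$, hence $(1-q^2)\,f(\infty) = 0$. Since $q \geq 2$ this forces $f \equiv g \equiv 0$, so the padded kernel vector must be zero, handling both claims simultaneously.

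The only non-routine step is the explicit identification of $\Ker(M)$ in the form $f(p) + g(x)$; the rest is bookkeeping between the $(q+1)^2$-dimensional and $q(q+1)$-dimensional ambient spaces, capped by a two-variable linear system with determinant $1-q^2 \neq 0$. I expect the main obstacle to be nothing deeper than matching Proposition~\ref{rankM}'s abstract $\psi_1$-isotypic component $V_{\psi_1}$ with the concrete $V_1 \oplus V_2$ of Lemma~\ref{rankA}, since once the common parametrization $v_{(p,x)} = f(p) + g(x)$ is available both parts are immediate.
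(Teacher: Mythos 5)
Your argument is correct, but it goes by a genuinely different route than the paper's. The paper proves $\mathrm{rank}(A)=\mathrm{rank}(\overline{A})$ constructively: by $2$-transitivity it suffices to treat the deleted column $a_{0\infty}$, and an explicit identity $(q-1)a_{0\infty}=v-w$, with $v,w$ combinations of retained columns, does the job; full column rank of $\overline{M}$ is then read off from the column count of $\overline{M}$ together with $\mathrm{rank}(M)=q(q-1)$ from Proposition~\ref{rankM} (the same relation, restricted to the derangement rows, where the diagonal columns vanish, shows the deleted columns of $M$ lie in the span of those of $\overline{M}$). You instead argue dually through kernels: a column-kernel vector of $\overline{A}$ (resp.\ $\overline{M}$), padded by zeros on the deleted coordinates, lies in $\Ker(A)=V_1\oplus V_2$ (resp.\ in $\Ker(M)$), hence has the form $v_{(p,x)}=f(p)+g(x)$ with $f$ and $g$ summing to zero, and vanishing at the $2q$ deleted coordinates forces $f\equiv g\equiv 0$ via the $2\times 2$ system with determinant $1-q^2\neq 0$; this treats both claims uniformly and avoids guessing any explicit column relation. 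The price is the step you yourself flag, the identification of $\Ker(M)$ with the off-diagonal restriction of $V_1\oplus V_2$; this is true, and can be justified more directly than by isotypic type: the restriction of any element of $V_1\oplus V_2$ lies in $\Ker(M)$ because the diagonal columns of $A$ vanish on derangement rows, the restriction map is injective (if $f(p)+g(x)=0$ for all $p\neq x$ then $f$ and $g$ are constant, hence zero by the sum-zero condition), and $\dim\Ker(M)=q(q+1)-q(q-1)=2q$ by Proposition~\ref{rankM}. You should record this injectivity explicitly, since your sketch assumes the image is $2q$-dimensional; with that small addition the proof is complete.
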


\begin{proof}
We start by proving  that the columns indexed by $(\infty,t)$,
$(t,\infty)$ (for $t\in \mathbb{F}_q$) of $A$ are a linear combination
of the columns of  $\overline{A}$. We denote by $a_{xy}$ the column of
$A$ indexed by the 
ordered pair $(x,y)$, for $x,y\in
\mathbb{P}_q$. Since $G_q$ is $2$-transitive, it suffices to prove
that $a_{0\infty}$ is a linear combination of the columns of
$\overline{A}$. Set $v=\sum_{x\neq 0,\infty}\sum_{y\neq
  \infty}a_{xy}$ and
$w=(q-2)\sum_{x\neq\infty}a_{0x}+a_{\infty\infty}$. By
construction, $v$ and $w$ are a linear combination of the columns of
$\overline{A}$. Also, it is easy
to check that 
\[
v_g=\left\{
\begin{array}{lcl}
q-1&&\textrm{if }0^g=\infty \textrm{ or }\infty^g=\infty,\\
q-2&&\textrm{otherwise,}
\end{array}
\right.
\quad
w_g=\left\{
\begin{array}{lcl}
0&&\textrm{if }0^g=\infty,\\
q-1&&\textrm{if }\infty^g=\infty,\\
q-2&&\textrm{otherwise.}\\
\end{array}
\right.
\] 
Hence $(q-1)a_{0\infty}=v-w$ and we get that $a_{0\infty}$ is a linear
combination of the columns of $\overline{A}$. Thence
$\mathrm{rank}(A)=\mathrm{rank}(\overline{A})$. 

Since $\overline{M}$
has $q(q-1)$ columns, 
Proposition~\ref{rankM} shows that $\overline{M}$ has full column rank.
\end{proof}

\section{Proof of Theorem~\ref{main}. }\label{mainSec}

At this point, we have all the tools to conclude the proof of
Theorem~\ref{main}. 
By Lemma~\ref{boundPGL2q}, it remains to prove that if $S$ is an
independent set of maximal size of $\Gamma_{G_q}$, then $S$ is the coset
of the stabilizer of a point. Up to multiplication of $S$ by a suitable
element of $G_q$, we may assume that the identity element of $G_q$ is
in $S$. In particular, we have to prove that $S$ is the stabilizer of
a point. By Lemma~\ref{rankA} and Proposition~\ref{rankMbar}, we have
that the characteristic vector $v_S$ of $S$ is a linear combination of
the columns of $\overline{A}$. Hence
\[
\left(
\begin{array}{cc}
1&0\\
0&\overline{M}\\
B&\overline{C}
\end{array}
\right)
\left(
\begin{array}{c}
v\\
w
\end{array}
\right)=v_S,
\]
for some vectors $v,w$. As the identity element of $G_q$ is in $S$ and
by the ordering of the rows of $\overline{A}$, we get
\[
v_S=\left(
\begin{array}{c}
1\\
0\\
t
\end{array}
\right).
\] 
So, $1^T v=1$,  $\overline{M}w=0$ and $Bv+\overline{C}w=t$. By
Proposition~\ref{rankMbar}, the matrix 
$\overline{M}$ has full column rank. Thence  $w=0$ and $Bv=t$. 

Now, for every point $x$ of $\mathbb{P}_q$, there exists a permutation
$g_x$ of $G_q$ fixing $x$ and acting fixed-point-freely on
$\mathbb{P}_q\setminus\{x\}$ (indeed, $g_x$ can be chosen any
non-identity unipotent matrix of $G_q$ fixing $x$). Order the rows of
$B$ so that the first $q+1$ rows are labelled by the permutations
$\{g_x\}_x$. In particular, up to permuting the rows of $B$, we get 
\[
B=\left(
\begin{array}{c}
I_{q+1}\\
B'
\end{array}
\right)
\quad \textrm{and}\quad
Bv={v\choose B'v}.
\]
Since $Bv$ is equal to the $\{0,1\}$-vector $t$, we obtain that $v$ is a
$\{0,1\}$-vector. But $1^T v=1$ and so $v$ must be the characteristic
vector of a point $p$ of $\mathbb{P}_q$.  This shows that $v_S$ is the
stabilizer of the point $p$ and the proof is 
complete. 

\section{Comments}\label{comments}

Theorem~\ref{main} proves that in the derangement graph $\Gamma_{G}$,
where $G$ is the group $\PGL(2,q)$, the independent sets of maximal
size are the cosets of the stabilizer of a point. The same result holds if
$G$ is the symmetric group~\cite{CaKu, LaMa} or if $G$ is the
alternating group~\cite{KuWo}. It is interesting to ask for which
other permutation groups does this result hold, and is there a way to
characterize the permutation groups that have this property?

In Lemma~\ref{boundPSL2q}, we saw that the cosets of the stabilizer of
a point are independent sets of maximal size in the derangement graph
of $\PSL(2,q)$. We further conjecture that, similar to the case for
$\PGL(2,q)$, $\Sym(n)$ and $\Alt(n)$, all independent sets of
maximal size in $\Gamma_{\PSL(2,q)}$ are cosets of the stabilizer of a
point.

\begin{conjecture}
Every independent set $S$ of the derangement graph of
  $\PSL(2,q)$ acting on the projective line $\mathbb{P}_q$ has size at
  most $q(q-1)/2$. Equality is met if and only if $S$ is the coset
  of the stabilizer of a point.
\end{conjecture}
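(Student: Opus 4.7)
The plan is to mirror the proof of Theorem~\ref{main} for the group $\PSL(2,q)$ in place of $\PGL(2,q)$. When $q$ is even, $\PSL(2,q)=\PGL(2,q)$, so the conjecture is a consequence of Theorem~\ref{main}; henceforth assume $q$ is odd. The bound $|S|\leq q(q-1)/2$ is already Lemma~\ref{boundPSL2q}, and cosets of point stabilizers achieve it, so the task is to show that these are the only extremal sets.

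First, I would compute the spectrum of $\Gamma_{\PSL(2,q)}$ from the character table of $\PSL(2,q)$ via Lemma~\ref{eigenvalues}. That character table is obtained by restricting from $\PGL(2,q)$, noting that exactly one $\eta_\beta$ or one $\nu_\gamma$ (depending on whether $q\equiv 3$ or $q\equiv 1\pmod 4$) splits on restriction into two irreducibles of degree $(q-1)/2$ or $(q+1)/2$; all other irreducibles remain irreducible. A direct computation shows that the valency of $\Gamma_{\PSL(2,q)}$ is $q(q-1)^2/4$ and that the minimum eigenvalue $\tau=-(q-1)^2/4$ is attained by (the restriction of) the Steinberg character $\psi_1$; indeed, Hoffman's bound forces $\tau=-d/q$ and $\psi_1(D)/\psi_1(1)=-|D|/q$ delivers this value. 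Pinning down precisely which characters realise $\tau$, and ruling out the split characters from contributing to the $\tau$-eigenspace, fixes the eigenspaces that can contain $v_S-(|S|/|\PSL(2,q)|)\,v_{\PSL(2,q)}$ by Lemma~\ref{bound}.

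Second, I would construct the analogue $A'$ of the matrix $A$ of Section~\ref{aux}, with rows indexed by elements of $\PSL(2,q)$ and columns by ordered pairs of points of $\mathbb{P}_q$, and let $M'$ denote its derangement block. Since $\PSL(2,q)$ is only $2$-transitive (with orbits on distinct $4$-tuples classified by whether the cross-ratio is a nonzero square), the analogue of Proposition~\ref{crossratio} for the Gram matrix $N'=(M')^TM'$ has more cases than in the $\PGL$ setting, but the same kind of cross-ratio bookkeeping — counting for how many scalars $\lambda$ the relevant characteristic polynomial is irreducible over $\mathbb{F}_q$ — gives an explicit description. I would then decompose $N'$ using the $\PSL(2,q)$-module structure on ordered pairs and mimic the character-theoretic argument of Proposition~\ref{rankM} to compute the eigenvalue $s_\chi$ on each isotypic component, with the goal of establishing $s_\chi>0$ for every irreducible constituent $\chi$ of the permutation module other than $\psi_1$. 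This forces the analogue of Lemma~\ref{rankA}: the column space of $A'$ equals the span of characteristic vectors of extremal independent sets.

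The main obstacle will be handling the split characters of degree $(q\pm 1)/2$. Their values on elements of type $v_r$ or $d_x$ involve quadratic Gauss-type sums of the form $(\pm 1\pm\sqrt{\pm q})/2$, so the crude bound $|\chi(g)|\leq 2$ used at the end of Proposition~\ref{rankM} is no longer available. One must instead use the explicit expressions for these character values, together with the square/non-square structure on the set $\Delta$ of ordered pairs with non-square cross-ratio, to force strict positivity of $s_\chi$. Once non-vanishing of all non-trivial $s_\chi$ is established, the rank argument of Proposition~\ref{rankMbar} yields a full-column-rank submatrix $\overline{M'}$, and the concluding argument of Section~\ref{mainSec} — writing $v_S$ as a linear combination of columns of the analogue of $\overline{A}$, deducing that the coefficient vector on the $\overline{M'}$-block vanishes, and then identifying the remaining coefficient vector as the characteristic vector of a single projective point via inspection of the unipotent rows of the $B$-block — transfers essentially verbatim, completing the proof.
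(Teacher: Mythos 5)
The statement you set out to prove is not a theorem of the paper: it appears there only as a conjecture, and the paper establishes nothing beyond the bound $|S|\le q(q-1)/2$ (Lemma~\ref{boundPSL2q}). So there is no proof of yours to compare with the paper's, and what you offer is a programme rather than a proof; judged as such it has concrete gaps. First, your treatment of the spectrum is not an argument: Hoffman's bound (Lemma~\ref{bound}) does not ``force'' $\tau=-d/q$. You must compute $\chi(D)/\chi(1)$ for every irreducible character of $\PSL(2,q)$, including the two split characters of degree $(q\pm1)/2$, verify that no value lies below $-(q-1)^2/4$, and show that only the Steinberg character attains the minimum; only then does the dimension count $1+q^2$ match the rank $q^2+1$ of the $\PSL$-analogue of $A$, making the analogue of Lemma~\ref{rankA} available (a point in your favour: if this holds, no analogue of Lemma~\ref{fix} is needed, since there is no index-$2$ subgroup to exclude). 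Second, and this is the heart of the matter, you identify but do not resolve the analogue of Proposition~\ref{rankM}: for the split characters the crude bound $|\chi(g)|\le 2$ fails on unipotent classes (values of the form $(\pm1\pm\sqrt{\pm q})/2$), and you supply no replacement estimate establishing $s_\chi>0$; likewise the analogue of Proposition~\ref{crossratio} cannot be reduced to a single tuple $(0,1,\infty,d)$ because $\PSL(2,q)$ is not $3$-transitive, and the resulting case analysis is only gestured at. These are precisely the open technical content of the conjecture, so the full-column-rank step and the endgame of Section~\ref{mainSec} currently have nothing to stand on.

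A smaller but genuine error: for $q$ even, $\PSL(2,q)=\PGL(2,q)$ and the extremal independent sets have size $q(q-1)$, not $q(q-1)/2$, so the conjecture as literally stated does not ``follow from Theorem~\ref{main}'' in that case; the bound $q(q-1)/2$ is the order of a point stabilizer in $\PSL(2,q)$ only for $q$ odd, and the conjecture should be read (and your proof attempt framed) as a statement about odd $q$ rather than citing the even case as settled.
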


It seems likely that the methods used in this paper could be applied
to $\PSL(2,q)$, since the character table of this group is well
understood. 

In our proof of Theorem~\ref{main}, we used the fact that the group
$\PGL(2,q)$ is 2-transitive (for example, in the proof of
Proposition~\ref{crossratio} and Proposition~\ref{rankMbar}). It is a
reasonable question to ask if this (being $2$-transitive) could be a
characterization of the groups that have the property that the
independent sets of maximal size in the derangement graph are the
cosets of the stabilizer of a point. It is not hard to show that this is
not a characterization, and it is further interesting to see how this
property can fail to hold for some $2$-transitive groups.

If $G$ is a $2$-transitive group, then
the permutation character of $G$ is the sum of the
trivial character and an irreducible character that we will call
$\psi$. The eigenvalue of the derangement graph $\Gamma_G$ arising
from $\psi$ is $-\frac{d}{\psi(1)}$, where $d$
is the valency of $\Gamma_G$. If this eigenvalue is indeed
the least eigenvalue of $\Gamma_G$, then by
Lemma~\ref{bound}, we have that the size of an independent set is
no bigger than the size of the
stabilizer of a point in $G$. Thus, if it were true that the
eigenvalue arising from the character $\psi$ is the least eigenvalue,
then we would have the bound like in Theorem~\ref{main}. But the
characterization of the sets that meet this bound is another question
entirely. In fact, there are examples of $2$-transitive groups in
which there are independent sets of maximal size that are not cosets
of the stabilizer of a point.

For example, let $G_{n,q}$ be the $2$-transitive group
$\PGL(n+1,q)$ in its action on the projective space
$\mathbb{P}_q^n$, with $n\geq 1$. 
Since $G_{n,q}$ contains a Singer cycle of length
$(q^{n+1}-1)/(q-1)$, the graph $\Gamma_{G_{n,q}}$ has a clique  of
size $|\mathbb{P}_q^n|=(q^{n+1}-1)/(q-1)$.  Thus, any independent set
of $\Gamma_{G_{n,q}}$ has size at most $|G_{n,q}|/|\mathbb{P}_q^n|$.
Naturally, the stabilizer of a point is an independent set for
$\Gamma_{G_{n,q}}$ of size $|G_{n,q}|/|\mathbb{P}_q^n|$
and so it is an independent set of maximal size.  But for $G_{n,q}$
with $n\geq 2$, 
the cosets of the stabilizer of a point are not the only independent
sets of maximal size. Indeed, it is not hard to see that the
stabilizer of a hyperplane of $\mathbb{P}_q^n$ in $G_{n,q}$ is also an
independent set of maximal size for $\Gamma_{G_{n,q}}$. Moreover, if
$n\geq 2$, then the stabilizer of a point and of a
hyperplane are not conjugate subgroups of $G_{n,q}$. Therefore, for
$n\geq 2$, the graph $\Gamma_{G_{n,q}}$ contains at least
$2(|G_{n,q}|/|\mathbb{P}_q^n|)^2$ independent sets of maximal
size. We make the following conjecture.

\begin{conjecture}Any independent set of maximal size in the
  derangement graph of $\PGL(n+1,q)$ acting on the projective
  space $\mathbb{P}_q^n$ is either the coset of the stabilizer of a
  point or the coset of the stabilizer of a hyperplane.
\end{conjecture}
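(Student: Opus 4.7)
The plan is to mimic the strategy used for $\PGL(2,q)$ in this paper, now with the dual pair (points, hyperplanes) playing the role that a single point orbit played before. First, use Green's character table of $\PGL(n+1,q)$ to identify the minimum eigenvalue $\tau$ of $\Gamma_{G_{n,q}}$. Since the Singer-cycle clique of size $|\mathbb{P}_q^n|$ combined with Lemma~\ref{bound} forces the Hoffman bound to be tight at the point stabilizer, one expects $\tau = -d/\psi(1)$, where $\psi$ is the non-trivial irreducible constituent of the permutation character on $\mathbb{P}_q^n$ (which, by duality, is also the non-trivial constituent of the permutation character on the set of hyperplanes). As in Lemma~\ref{fix}, a preliminary step is to pin down all irreducible characters that attain $\tau$ — beyond $\psi$ and its dual twist, one must handle sign-type contributions coming from the quotient $\PGL/\PSL$ when $\gcd(n+1,q-1)>1$, and show that the characteristic vector $v_S$ of any maximum independent set $S$ is orthogonal to those extra eigenspaces.

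Next I would build a matrix $A$ whose rows are labelled by $G_{n,q}$ and whose columns are labelled by ordered pairs $(p,p')$ of points and, separately, ordered pairs $(H,H')$ of hyperplanes, with entries $A_{g,(p,p')}=[p^g=p']$ and $A_{g,(H,H')}=[H^g=H']$. Every column is the characteristic vector of a coset of a point-stabilizer or a hyperplane-stabilizer, and each such coset is a maximum independent set of $\Gamma_{G_{n,q}}$. Consequently every column lies in the direct sum $V_1\oplus V_\tau$ of the trivial and the $\tau$-eigenspaces, and the goal is to prove that the column space of $A$ equals $V_1\oplus V_\tau$. By Lemma~\ref{bound} and the extended version of Lemma~\ref{fix}, this would force $v_S$ to be a non-negative integer combination of the columns of $A$; a parity/entry-counting argument in the spirit of Proposition~\ref{rankMbar} (using a unipotent element that fixes exactly one point, and dually exactly one hyperplane) should then reduce such a $\{0,1\}$-vector of weight $|G_{n,q}|/|\mathbb{P}_q^n|$ to the characteristic vector of a single coset of a point-stabilizer or of a hyperplane-stabilizer.

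The dimension count required above reduces to computing the rank of $A^T A$ block by block. The diagonal blocks are, by $2$-transitivity on points (respectively on hyperplanes), again of the form $a I + b (J-I)\otimes(J-I)$, while the cross block between ``point-pair'' columns and ``hyperplane-pair'' columns is governed by incidence and by the action of $G_{n,q}$ on point-hyperplane flags. As in Proposition~\ref{rankM}, each eigenspace of $A^T A$ is a $G_{n,q}$-submodule of the relevant permutation module, so one only needs to decide, for each irreducible constituent $\chi$ of the permutation characters on points and on hyperplanes, whether the corresponding eigenvalue vanishes. The expected outcome is that precisely the constituents contributing to $V_1\oplus V_\tau$ survive, which then yields the correct rank $2|\mathbb{P}_q^n|^2 - |\mathbb{P}_q^n|^2$ or similar, matching $\dim(V_1\oplus V_\tau)$.

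The main obstacle is twofold. First, the character-theoretic bookkeeping is significantly heavier than for $\PGL(2,q)$: Green's irreducible characters are parametrized by combinatorial data on partitions, and one must verify by a global inequality (analogous to the end of Proposition~\ref{rankM}) that no stray character lowers the eigenvalue below $\tau$ or contributes unexpectedly to $V_1\oplus V_\tau$. Second, the role of the cross-ratio in Proposition~\ref{crossratio} is now played by the orbit classification of $G_{n,q}$ on pairs of flags, and controlling the $s_\chi$ positivity for ``mixed'' blocks (point-coset versus hyperplane-coset) seems to be where the real work lies; for $n\geq 2$ these two kinds of cosets are genuinely different $G_{n,q}$-orbits, so an entirely new positivity argument, probably exploiting the incidence structure between $\mathbb{P}_q^n$ and its dual, will be required.
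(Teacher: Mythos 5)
This statement is not a theorem of the paper at all: it is stated there as an open conjecture, and the authors explicitly remark that it is not clear their method extends, in particular because determining the minimum eigenvalue of $\Gamma_{G_{n,q}}$ from Green's character table is already a serious obstacle. Your text is likewise a research plan rather than a proof: every step that carries real weight is phrased as ``one expects'', ``should then reduce'', ``the expected outcome is''. Two of these gaps are fatal as written. First, the whole module method hinges on knowing that the least eigenvalue of $\Gamma_{G_{n,q}}$ is $-d/\psi(1)$ and on knowing exactly which irreducible characters attain it; the Singer-cycle clique gives only the clique--coclique bound $|S|\le |G_{n,q}|/|\mathbb{P}_q^n|$, and tightness of that bound does \emph{not} imply tightness of the Hoffman bound of Lemma~\ref{bound}, so you cannot conclude (as in the equality case of Lemma~\ref{bound} and in Lemma~\ref{fix}) that $v_S-\frac{|S|}{|G|}v_G$ lies in the $\tau$-eigenspace, nor that the span of the $v_S$'s has dimension at most $1+\psi(1)^2$. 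Without the full spectrum --- precisely the computation the paper flags as challenging --- the eigenspace localization of $v_S$, which is the engine of the whole argument, is unavailable.

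Second, the endgame you sketch would fail structurally. For $\PGL(n+1,q)$ the permutation characters on points and on hyperplanes coincide (a matrix and its inverse-transpose are conjugate up to inversion), so the characteristic vectors of hyperplane-stabilizer cosets lie in the \emph{same} isotypic component $I_{\lambda_1}\oplus I_\psi$ of $\mathbb{C}G_{n,q}$, of dimension $1+\psi(1)^2$, that is already spanned by the point-stabilizer coset vectors. Hence adjoining the hyperplane-pair columns does not enlarge the column space, and the analogue of Proposition~\ref{rankMbar} --- a full-column-rank submatrix $\overline{M}$ forcing $w=0$ --- cannot hold for the doubled matrix: the hyperplane columns are linearly dependent on the point columns, so the representation of $v_S$ as a column combination is massively non-unique and the argument of Section~\ref{mainSec} does not isolate a single coset, let alone decide between the two families. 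This is exactly where a genuinely new idea is needed (you acknowledge this in your last paragraph), which is why what you have written is a plausible programme but not a proof of the conjecture.
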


It is not clear that the method in this paper could be used to prove
this conjecture.  In particular, even if the character table of
$\PGL(n+1,q)$ is known~\cite{Gr}, it would still be challenging to
obtain the minimum eigenvalue of $\Gamma_{G_{n,q}}$ as in
Lemma~\ref{boundPGL2q}.

Further, there exist $2$-transitive groups of degree $n$ where the
number of independent sets of maximal size is $n^{n-1}$.
Clearly, this means that there are many 
independent sets of maximal size which are not cosets of the stabilizer
of a point. For example, let $n$ be a power of a prime and
$\mathbb{F}_n$ the field with $n$ 
elements. The affine 
general linear group $G$ on $\mathbb{F}_n$ (i.e. the group generated by
the permutations of $\mathbb{F}_n$ of the form $f_{a,b}:\xi\mapsto a\xi+b$,
for $a,b\in\mathbb{F}_n$ and $a\neq 0$) is a $2$-transitive
group. Since $G$ is a Frobenius group with kernel of size $n$ and
complement of size $n-1$, we
obtain that $\Gamma_G$ is the disjoint union of $n-1$ complete graphs
$K_n$. In particular, we get that $\Gamma_G$ has
$n^{n-1}$ independent sets of maximal size.

From the above comments it is clear that a result similar to
Theorem~\ref{main} does not hold
for all $2$-transitive groups, but perhaps we do better to consider
3-transitive groups. 
In particular, some information on the $3$-transitive groups is listed
in Table~\ref{Tb3} and Table~\ref{Tb4},
see~\cite[page~$195$,~$197$]{Ca}. Table~\ref{Tb3} lists  all possible
socles of an almost simple $3$-transitive 
group and Table~\ref{Tb4} gives all possible point stabilizers of
an affine $3$-transitive group.

\begin{table}[!h]
\begin{tabular}{l|ccccccccc}
Group & $M_{11}$&$M_{11}$&$M_{12}$&$M_{22}$&$M_{23}$&$M_{24}$&
$\mathrm{PSL}(2,q)$&$\textrm{Alt}(n)$   \\\hline
Degree&   $11$ & $12$   & $12$   & $22$   & $23$   & $24$&$q+1$&$n$\\  
\end{tabular}\caption{Socle of an almost simple $3$-transitive group.}
\label{Tb3}
\begin{tabular}{l|cccc}
Group & $\mathrm{SL}(n,2)\leq H\leq \Gamma\mathrm{L}(n,2)$&
$\textrm{GL}(1,3)$&$\Gamma\textrm{L}(1,4)$&$\textrm{Alt}(7)$\\\hline
Degree&   $2^n$                                      &$3$&$4$&$16$ \\  
\end{tabular}\caption{Point stabilizer of an affine $3$-transitive
  group.}
\label{Tb4}
\end{table}
Using \texttt{GAP}, it is straight-forward to build the derangement
graph for each of the $3$-transitive groups of degree $11,12,22,23,24$
and $16$ and to then find all the independent sets of maximal size.
Indeed, for every one of these groups, every independent set of maximal
size of $\Gamma_G$ is the coset of the stabilizer of a point. Thus we
conclude with the following conjecture.
\begin{conjecture}Let $G$ be a $3$-transitive group of degree $n$. Every
  independent set $S$ of the derangement graph of 
  $G$ acting on $\Omega$ has size at most $|G|/n$. Equality is met if
  and only if $S$ is the coset of the stabilizer of a point.
\end{conjecture}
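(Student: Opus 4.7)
The plan is to proceed case by case via the classification of finite $3$-transitive permutation groups: the almost simple groups of Table~\ref{Tb3} and the affine groups of Table~\ref{Tb4}. For the finite list of sporadic examples---the Mathieu groups on $11,12,22,23,24$ points, and the affine example $\Alt(7)$ on $16$ points---the conjecture has been verified computationally (see the discussion following Table~\ref{Tb4}). The infinite almost simple families $\Sym(n)$ and $\Alt(n)$ are handled by \cite{CaKu,LaMa,KuWo}, and $\PGL(2,q)$ (which coincides with $\PSL(2,q)$ when $q$ is even, the only case in which $\PSL(2,q)$ is $3$-transitive on the projective line) is Theorem~\ref{main}. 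For extensions of $\PGL(2,q)$ by field automorphisms one should be able to reduce to Theorem~\ref{main} by decomposing a candidate extremal $S$ over cosets of $\PGL(2,q)$ and averaging as in Lemma~\ref{fix}. This leaves the infinite affine family $\SL(n,2)\leq H\leq \Gamma L(n,2)$ acting on $\mathbb{F}_2^n$.

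For this affine family the bound $|S|\leq |G|/2^n$ is essentially free: the translation subgroup $T\cong \mathbb{F}_2^n$ is a clique of size $2^n$ in $\Gamma_G$ (any nontrivial translation is a derangement, and the product of two distinct nontrivial translations is again a nontrivial translation), so the clique--coclique inequality for vertex-transitive graphs gives $\alpha(\Gamma_G)\leq |G|/2^n$, with equality attained by any point stabilizer. To characterize the extremal sets, I would mirror Sections~\ref{aux} and~\ref{mainSec}: use Green's character table of $\GL(n,2)$ \cite{Gr} to identify the minimum eigenvalue and its eigenspace, then introduce a matrix $A$ whose columns are the characteristic vectors of cosets of point stabilizers, prove an analogue of Propositions~\ref{rankA}--\ref{rankMbar} showing that these columns span the direct sum of the eigenspaces for the largest and smallest eigenvalues, and conclude as in Section~\ref{mainSec} that a maximum independent set containing the identity must itself be the characteristic vector of a point stabilizer.

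The main obstacle is the character-theoretic step. Unlike for $\PGL(2,q)$, whose irreducibles take small transparent values on the derangement class, the irreducibles of $\GL(n,2)$ are indexed by combinatorial data built from conjugacy classes of smaller $\GL$'s, and tightly bounding their character sums on the derangement class to show that no eigenvalue other than the expected one is as small as $-d/(2^n-1)$ (with $d$ the valency of $\Gamma_G$) is delicate for general $n$. A secondary obstacle is ruling out ``accidental'' extremal sets coming from the dual geometry of $\mathbb{F}_2^n$---for instance, stabilizers of affine hyperplanes, analogous to the phenomenon for $\PGL(n+1,q)$ highlighted in the excerpt; the essential input here is the full $3$-transitivity, which should force the extremal eigenspace to be spanned by point-stabilizer cosets alone. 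A conceptually cleaner, case-free proof exploiting only $3$-transitivity would be far more satisfying, but appears out of reach with present techniques.
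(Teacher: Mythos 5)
The statement you are trying to prove is not a theorem of the paper at all: it is the closing conjecture, and the authors offer no proof of it. Their only evidence is Theorem~\ref{main} (the $\PGL(2,q)$ case), the known results for $\Sym(n)$ and $\Alt(n)$, and a \texttt{GAP} verification for the finitely many $3$-transitive groups of degrees $11,12,22,23,24,16$. Your proposal reproduces exactly this state of affairs and then sketches a program for the remaining cases, but it does not close any of them, and you say so yourself. Concretely, two genuine gaps remain. First, for the affine family $G=2^n{:}H$ with $\mathrm{SL}(n,2)\leq H\leq \Gamma\mathrm{L}(n,2)$, the clique--coclique bound via the translation clique does give $|S|\leq |G|/2^n$, but the characterization of equality --- the entire content of the ``only if'' direction --- is precisely the step you leave open; moreover the relevant character theory is that of the affine group $2^n{:}H$ (via Clifford theory), not of $\GL(n,2)$ itself, so even the analogue of Table~\ref{evaluesPGL2q} is not simply a matter of quoting Green~\cite{Gr}. (As an aside, the ``accidental'' extremal sets you worry about, stabilizers of affine hyperplanes, contain nontrivial translations and so are not even intersecting; the real difficulty is the eigenspace/rank argument, not this.)

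Second, your treatment of the almost simple case with socle $\PSL(2,q)$ is incomplete in two ways. The $3$-transitive groups with this socle are not only the groups between $\PGL(2,q)$ and $\mathrm{P}\Gamma\mathrm{L}(2,q)$: for $q$ an odd square there are the sharply $3$-transitive twisted groups (often denoted $M(q)$) which contain $\PSL(2,q)$ but do \emph{not} contain $\PGL(2,q)$, so no ``decompose over cosets of $\PGL(2,q)$ and apply Theorem~\ref{main}'' reduction is available for them. And even for $\PGL(2,q)\leq G\leq \mathrm{P}\Gamma\mathrm{L}(2,q)$, the coset decomposition only yields the bound: if equality holds, each nonempty slice $S\cap \PGL(2,q)g$, translated back, is an extremal intersecting set of $\PGL(2,q)$ and hence (by Theorem~\ref{main}) a coset of a point stabilizer, but you still have to show that all slices correspond to the \emph{same} point so that $S$ is a coset of a point stabilizer of $G$; the averaging trick of Lemma~\ref{fix} does not do this, and no argument is supplied. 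Until these steps are carried out, the statement remains, as in the paper, a conjecture.
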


\newpage

\end{document}